\documentclass[12pt]{article}

\usepackage[T1]{fontenc}
\usepackage[utf8]{inputenc}
\usepackage{amsmath,amssymb,amsfonts}
\usepackage{amsthm}
\usepackage{authblk}
\usepackage[numbers]{natbib}
\usepackage{enumitem}
\usepackage{geometry}
\usepackage{hyperref}
\usepackage{color}

\newtheorem{theorem}{Theorem}[section]
\newtheorem{lemma}[theorem]{Lemma}

\newtheorem{definition}[theorem]{Definition}

\newtheorem{conjecture}[theorem]{Conjecture}

\newcommand{\Hcal}{\mathcal H}

\title{A stability theorem for Berge Hamiltonian cycles under a minimum degree condition}

\author[1]{\small\bf Yichen Wang\thanks{Email: wangyich22@mails.tsinghua.edu.cn}}
\author[2]{\small\bf D\'aniel Gerbner\thanks{Email: gerbner.daniel@renyi.hu
}}
\author[1]{\small\bf Xiamiao Zhao\thanks{\textit{Corresponding author:}Email: zxm23@mails.tsinghua.edu.cn}}

\affil[1]{\small Department of Mathematical Sciences, Tsinghua University, Beijing, P.R. China.}
\affil[2]{\small HUN-REN Alfr\'ed R\'enyi Institute of Mathematics, Budapest, Hungary.}

\date{}

\begin{document}

\maketitle

\begin{abstract}
In this paper,
we study extremal and stability problems for Berge Hamiltonian cycles in
$r$-uniform hypergraphs under a minimum degree condition.  Let
       $ g_r(n,t)=\binom{n-t}{r}+t\binom{t}{r-1}$,
and let $t=t(k)$ be the unique integer satisfying
$\binom{t-1}{r-1}<k\le \binom{t}{r-1}$.  Using a sharp
P\'osa-type degree sequence theorem
of Salia, we prove an extremal upper bound on the number of hyperedges in
an $n$-vertex $r$-uniform hypergraph with minimum degree at least $k$ and
with no Berge Hamiltonian cycle.  We also prove a stability theorem in the
dense range before the first minimizer of $g_r(n,t)$: every near-extremal
example is contained in one of two natural non-Hamiltonian constructions.
\end{abstract}

\section{Introduction}

We use the following basic notation from the beginning.  For a positive
integer $m$, write $[m]=\{1,\ldots,m\}$, and for a set $X$, write
$\binom{X}{s}$ for the family of all $s$-subsets of $X$.  For a graph or
hypergraph $F$, let $e(F)$ denote its number of edges and let
$\delta(F)$ denote its minimum vertex degree.

A Berge cycle of length $\ell$ in a hypergraph $\Hcal$ is an alternating
sequence
$$
        v_1,E_1,v_2,E_2,\ldots,v_\ell,E_\ell
$$
of distinct vertices $v_1,\ldots,v_\ell$ and distinct hyperedges
$E_1,\ldots,E_\ell$ such that $v_i,v_{i+1}\in E_i$ for every
$i\in[\ell]$, where the indices are taken modulo $\ell$.  A Berge
Hamiltonian cycle is a Berge cycle whose defining vertices are all the
vertices of the hypergraph.

Hamiltonicity in graphs is governed by classical degree conditions such as
Dirac's theorem \cite{Dirac_1952}, Ore's theorem \cite{Ore_1959},
P\'osa's theorem \cite{Posa_1962}, and Chv\'atal's degree-sequence
theorem \cite{Chvatal_1972}.  Another closely related line of work asks
for the maximum number of edges in a non-Hamiltonian graph with prescribed
minimum degree.  A theorem of Erd\H{o}s \cite{Erdos_1962_Remarks} states
that if $G$ is an $n$-vertex non-Hamiltonian graph with
$\delta(G)\ge d$, where $1\le d\le \lfloor(n-1)/2\rfloor$, then
$$
        e(G)\le
        \max\left\{
        \binom{n-d}{2}+d^2,\,
        \binom{n-\lfloor(n-1)/2\rfloor}{2}
        +\left\lfloor\frac{n-1}{2}\right\rfloor^2
        \right\}.
$$
The first extremal construction is obtained from a clique on $n-d$
vertices by adding $d$ vertices with the same $d$ neighbors in the
clique, and the second is 
%the balanced endpoint obstruction.  
obtained from a complete bipartite graph $K_{\left\lfloor\frac{n-1}{2}\right\rfloor,\left\lfloor\frac{n-1}{2}\right\rfloor}$ by adding one or two vertices so that altogether we have $n$ vertices, taking the union of these vertices and one of the parts, and adding all the edges that are inside this union.
A stability
version due to F\"uredi, Kostochka and Luo
\cite{Furedi_Kostochka_Luo_2017} shows that, above the next extremal
value, a non-Hamiltonian graph is forced to be contained in one of the
corresponding canonical obstructions.  The present paper develops an
analogue of this extremal and stability picture for Berge Hamiltonian
cycles in uniform hypergraphs.

In hypergraphs, Berge Hamiltonicity has been studied from several
directions, including extremal results for long Berge cycles
\cite{Furedi_Kostochka_Luo_2019,Furedi_Kostochka_Luo_2020,Furedi_Kostochka_Luo_2021}
and Dirac-type theorems \cite{Kostochka_Luo_McCourt_2024}.  Our main tool
is Salia's P\'osa-type theorem \cite{Salia_2024}, which we record in
Section~\ref{sec:preliminaries}.

The natural extremal examples for our problem are the following two
hypergraphs.  Let $T$ and $S$ be disjoint sets with $|T|=|S|=t$, and let
$A$ be a set of size $n-t$ disjoint from $T$ and containing $S$.

\begin{definition}
The hypergraph $\Hcal^{(1)}_{n,t}$ has vertex set $T\cup A$.  It contains
all $r$-sets inside $A$, and all $r$-sets of the form
$$
        \{x\}\cup R,\qquad x\in T,\quad R\in\binom{S}{r-1}.
$$
\end{definition}

\begin{definition}
The hypergraph $\Hcal^{(2)}_{n,t}$ is the union of two complete
$r$-uniform hypergraphs whose vertex sets have sizes $n-t$ and $t+1$ and
intersect in exactly one vertex.
\end{definition}
For integers $n$ and $t$, put
\begin{equation}\label{eq:def-gr}
        g_r(n,t)=\binom{n-t}{r}+t\binom{t}{r-1}.
\end{equation}
This is the number of edges in $\Hcal^{(1)}_{n,t}$.  Given a
minimum-degree parameter $k$, we write $t=t(k)$ for the unique integer
satisfying
$$
        \binom{t-1}{r-1}<k\le \binom{t}{r-1}.
$$

Both constructions have no Berge Hamiltonian cycle.  For
$\Hcal^{(2)}_{n,t}$ this follows from the cut vertex.  For
$\Hcal^{(1)}_{n,t}$, every defining edge incident with a vertex of $T$
must use a vertex of $S$.  Thus, in a Hamiltonian cycle
all the vertices adjacent to the vertices of $S$ belong to $T$, thus the cycle alternates between $S$ and $T$,
%would use all degree of every vertex of $S$ to pass through the vertices of $T$, 
leaving no
place to insert the vertices of $A\setminus S$, which is nonempty in the
range $t\le \lfloor (n-1)/2\rfloor$.

Our first result is the extremal upper bound.

\begin{theorem}[Extremal bound]\label{thm:extremal}
Let $r\ge 3$, let $n>2r$, and let $\Hcal$ be an $n$-vertex
$r$-uniform hypergraph with no Berge Hamiltonian cycle.  Let
$$
        r+1\le k< \binom{\lfloor (n-1)/2\rfloor}{r-1},
$$
and let $t=t(k)$ be the unique integer satisfying
$$
        \binom{t-1}{r-1}< k\le \binom{t}{r-1}.
$$
If $\delta(\Hcal)\ge k$, then
$$
e(\Hcal)\le
\begin{cases}
\max\left\{g_r(n,t),g_r\left(n,\left\lfloor\frac{n-1}{2}\right\rfloor\right)\right\},
& n\text{ odd},\\[4pt]
\max\left\{g_r(n,t),g_r\left(n,\frac{n-2}{2}\right)+\frac n2-1\right\},
& n\text{ even}.
\end{cases}
$$
For odd $n$ the bound is attained by the corresponding
$\Hcal^{(1)}_{n,t}$ constructions.
\end{theorem}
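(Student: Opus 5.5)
We will prove the bound by combining Salia's P\'osa-type theorem with a counting argument over the low-degree vertices. By Salia's theorem (recorded in Section~\ref{sec:preliminaries}), if the degree sequence $d_1\le d_2\le\cdots\le d_n$ of an $r$-uniform hypergraph satisfies a P\'osa-type condition, then it has a Berge Hamiltonian cycle. The condition is essentially that for every $1\le i<(n-1)/2$ we have $d_i>\binom{i}{r-1}$, together with an extra condition at $i=\lfloor (n-1)/2\rfloor$ when $n$ is even. Since $\Hcal$ has no Berge Hamiltonian cycle, this condition fails.

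So there is an index $i$ with $d_i\le\binom{i}{r-1}$. Equivalently, there is a set $X$ of $i$ vertices each of degree at most $\binom{i}{r-1}$. We take $i$ to be such a violating index.

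\emph{Range of $i$.} Since $\delta(\Hcal)\ge k>\binom{t-1}{r-1}$, every $i\le t-1$ is excluded. The hypothesis $k<\binom{\lfloor(n-1)/2\rfloor}{r-1}$ guarantees $t\le\lfloor (n-1)/2\rfloor$, so the admissible range $t\le i\le\lfloor(n-1)/2\rfloor$ is nonempty. In the even case we also allow the special index coming from the boundary clause of Salia's theorem.

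\emph{Counting edges.} We split $E(\Hcal)$ into hyperedges meeting $X$ and hyperedges inside $V\setminus X$.
\begin{itemize}
\item There are at most $\binom{n-i}{r}$ hyperedges inside $V\setminus X$.
\item There are at most $\sum_{x\in X}d(x)\le i\binom{i}{r-1}$ hyperedges meeting $X$.
\end{itemize}
Hence $e(\Hcal)\le \binom{n-i}{r}+i\binom{i}{r-1}=g_r(n,i)$.

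\emph{Convexity step.} Next we show that, for fixed $n$, the function $g_r(n,x)$ is convex in the integer variable $x$ on $[t,\lfloor(n-1)/2\rfloor]$, or at least that its discrete differences change sign at most once from negative to positive. This is the step I expect to be the main obstacle. The first difference is
\[
g_r(n,x+1)-g_r(n,x)=-\binom{n-x-1}{r-1}+(x+1)\binom{x+1}{r-1}-x\binom{x}{r-1}.
\]
In this expression, the term $-\binom{n-x-1}{r-1}$ is increasing in $x$, and the remaining part, namely $\binom{x+1}{r-1}+x\binom{x}{r-2}$, is also increasing in $x$. So the sequence of differences is increasing. It follows that the maximum over the interval is attained at an endpoint, which gives $\max\{g_r(n,t),g_r(n,\lfloor(n-1)/2\rfloor)\}$. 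I would verify this monotonicity carefully using $r\ge3$ and $n>2r$. Then, for odd $n$, the bound is attained by $\Hcal^{(1)}_{n,t}$, which has minimum degree $\binom{t}{r-1}\ge k$.

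\emph{Even $n$.} For even $n$ the endpoint $i=(n-2)/2$ needs refinement, and this is the second delicate point. In this case Salia's boundary condition yields a set $X$ of size $(n-2)/2$ of vertices with degree at most $\binom{(n-2)/2}{r-1}$, together with a little extra slack. I would redo the count allowing the hyperedges meeting $X$ to include up to $\frac n2-1$ additional edges, which come from the one extra vertex whose degree may exceed the threshold. This gives $g_r(n,\frac{n-2}{2})+\frac n2-1$. Combined with the convexity argument applied to $t\le i<(n-2)/2$, this yields the stated maximum.
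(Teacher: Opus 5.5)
Your approach matches the paper's: a failed Salia inequality at some index $i\ge t$, the count $e(\Hcal)\le\binom{n-i}{r}+\sum_{x\in X}d(x)$ over the $i$ smallest-degree vertices, and increasing first differences of $g_r(n,\cdot)$ (the computation in Lemma~\ref{lem:g-unimodal}) to push $g_r(n,i)$ to an endpoint. The odd case is fine.

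\textbf{The gap is the even case.} You never state the boundary clause. You then attribute the term $\frac n2-1$ to ``one extra vertex whose degree may exceed the threshold.'' Salia's theorem gives no such statement. Even if one vertex of $X$ could exceed $\binom{(n-2)/2}{r-1}$, you would have no control over the size of the excess, so the bound $\frac n2-1$ would not follow. As written, the term is reverse-engineered from the target rather than derived.

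The correct mechanism uses the form recorded in Theorem~\ref{thm:salia}:
\begin{itemize}
\item If some $i$ with $r\le i\le\frac{n-2}{2}$ has $d_i\le\binom{i}{r-1}$, the ordinary argument applies.
\item Otherwise the failing condition must be $d_{(n-2)/2}\le\binom{(n-2)/2}{r-1}+1$. Then \emph{each} of the $\frac{n-2}{2}$ vertices of $X$ has degree at most $\binom{(n-2)/2}{r-1}+1$. Summing, the extra $+1$ per vertex contributes exactly $\frac{n-2}{2}=\frac n2-1$, which gives $g_r(n,\frac{n-2}{2})+\frac n2-1$.
\end{itemize}

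Two smaller points:
\begin{itemize}
\item \emph{Sharpness.} $\Hcal^{(1)}_{n,t}$ only attains the first term of the maximum. When the maximum is $g_r(n,\lfloor\frac{n-1}{2}\rfloor)$, you also need $\Hcal^{(1)}_{n,\lfloor(n-1)/2\rfloor}$. It is admissible because its minimum degree $\binom{\lfloor(n-1)/2\rfloor}{r-1}$ exceeds $k$ by hypothesis.
\item \emph{First line of Salia's theorem.} It reads $d_i>i$ for $i<r$, not $d_i>\binom{i}{r-1}$. It holds automatically since $\delta(\Hcal)\ge k\ge r+1$, which is how the paper discards it. Your conclusion that all $i\le t-1$ are excluded survives, but it should be justified this way for $i<r$.
\end{itemize}
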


When $n$ is even, the additional term
$g_r(n,(n-2)/2)+n/2-1$ comes from the exceptional last condition in
Salia's degree-sequence theorem (Theorem \ref{thm:salia}).  We only use it as an upper-bound term.
We do not claim here that this endpoint value is always attained.

If $f$ is a function on an
integer interval $I$, then the first minimizer of $f$ on $I$ is the
smallest integer $x\in I$ for which
$$
f(x)=\min_{y\in I} f(y).
$$
Thus, when we speak about the first minimizer of $g_r(n,x)$ on an
interval, the variable $x$ is restricted to the integers in that interval.
\begin{theorem}[Stability]\label{thm:stability}
Fix $r\ge 4$.  There are constants $C_r$ and $n_0=n_0(r)$ such that the
following holds for all $n\ge n_0$.  Let $k$ and $t=t(k)$ satisfy
$$
        t\ge C_r,\qquad
        \binom{t-1}{r-1}< k\le \binom{t}{r-1},
$$
and let $t_0$ be the first minimizer of $g_r(n,x)$ on
$r+1\le x\le \lfloor (n-1)/2\rfloor$.  Assume $t+1\le t_0$ and
$$
\begin{cases}
g_r(n,t+1)\ge g_r\left(n,\left\lfloor\frac{n-1}{2}\right\rfloor\right),
& n\text{ odd},\\[4pt]
g_r(n,t+1)\ge g_r\left(n,\frac{n-2}{2}\right)+\frac n2-1,
& n\text{ even}.
\end{cases}
$$
If $\Hcal$ is an $n$-vertex $r$-uniform hypergraph with
$\delta(\Hcal)\ge k$, with no Berge Hamiltonian cycle, and with
$$
        e(\Hcal)>g_r(n,t+1),
$$
then $\Hcal$ is a subhypergraph of either $\Hcal^{(1)}_{n,t}$ or
$\Hcal^{(2)}_{n,t}$.
\end{theorem}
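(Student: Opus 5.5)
We will derive the stability statement from Salia's P\'osa-type theorem (Theorem~\ref{thm:salia}) applied to the degree sequence of $\Hcal$. Order the vertices by degree, $d_1\le d_2\le\dots\le d_n$. Since $\Hcal$ has no Berge Hamiltonian cycle, Theorem~\ref{thm:salia} gives a violation index $s$: either some $s<(n-1)/2$ with $d_s\le\binom{s}{r-1}$ (roughly $s$ vertices of low degree), or, for even $n$, the exceptional endpoint configuration at $s=(n-2)/2$. The minimum degree hypothesis $\delta(\Hcal)\ge k>\binom{t-1}{r-1}$ forces $s\ge t$.

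The first step reuses the counting from the proof of Theorem~\ref{thm:extremal}. The edges meeting the $s$ low-degree vertices number at most $s\binom{s}{r-1}$, and at most $\binom{n-s}{r}$ edges lie inside the rest. This gives $e(\Hcal)\le g_r(n,s)$, plus $n/2-1$ in the exceptional even case. Since $t+1\le t_0$, the function $g_r(n,\cdot)$ is decreasing on $[t,t_0]$. It is convex-like, so on $[t+1,\lfloor(n-1)/2\rfloor]$ its maximum is attained at an endpoint, and the endpoint hypothesis shows that this maximum is at most $g_r(n,t+1)$. As $e(\Hcal)>g_r(n,t+1)$, we obtain $s=t$ exactly, and the exceptional case is ruled out. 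Checking the unimodality of $g_r(n,x)$ (that it is decreasing then increasing) is a routine calculus check for $r\ge4$ and large $n$.

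The second step extracts structure from $s=t$. Let $L$ be a set of $t$ vertices of degree at most $\binom{t}{r-1}$, and let $B=V\setminus L$. The deficit $g_r(n,t)-e(\Hcal)<g_r(n,t)-g_r(n,t+1)\approx\binom{n-t}{r-1}-\dots$ is small compared with $\binom{n-t-1}{r-1}$. Hence all but a few $r$-sets in $B$ are edges, $B$ is almost complete, and every vertex of $B$ lies in many edges inside $B$. We then aim to show the following.
\begin{enumerate}[label=(\roman*)]
\item Each $x\in L$ has all its edges inside $L\cup N$, where $N$ is a set of size $t$ in the shadow. Degree at least $k>\binom{t-1}{r-1}$ forces the link of $x$ to span at least $t$ vertices. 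A Berge-path absorption argument in the nearly complete $B$ shows that if the links of the vertices of $L$ jointly reach $t+1$ vertices of $B$ in a ``matchable'' way, we can route a Hamiltonian cycle. This rerouting is possible because $B$ is Berge-Hamiltonian-connected with room to spare, with $t\ge C_r$ providing flexibility.
\item The resulting obstruction is of one of two types, each matching one construction. In the first, the vertices of $L$ form an independent-like set whose edges all go into a common $t$-set $S\subseteq B$; this gives $\Hcal\subseteq\Hcal^{(1)}_{n,t}$ with $T=L$ and $A=B$. In the second, $L$ together with one vertex $v\in B$ spans all edges touching $L$, so $v$ is a cut vertex; this gives $\Hcal\subseteq\Hcal^{(2)}_{n,t}$. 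The intermediate configurations, where $L$ has internal edges and several attachment vertices, will be shown to admit a Berge Hamiltonian cycle by a Hall-type / P\'osa rotation argument. The key point is that if $L\cup B$ is 2-connected through $L$ and the neighbourhood of $L$ in $B$ is not confined to $t$ vertices hit only from $L$, we can cover $L$ by Berge paths with endpoints in $B$ using distinct hyperedges and close up through $B$.
\end{enumerate}
Finally, we check that the missing $r$-sets of $B$ cannot obstruct this. For $\Hcal^{(1)}$, edges inside $A=B$ are unrestricted anyway.

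The main obstacle is step (i)--(ii): rigorously classifying how $L$ attaches to $B$. The difficulty is Berge-specific, namely ensuring the distinctness of hyperedges in the absorbing paths when vertices of $L$ have links concentrated on few vertices. I would first try a matching argument: build an auxiliary bipartite graph between $L$ and the hyperedges incident to $L$, apply Hall's theorem to get distinct representative edges, and show that Hall can fail only in the two extremal patterns. Here I use $k>\binom{t-1}{r-1}$ so that each link has support at least $t$, and $t\ge C_r$ to absorb lower-order error terms.
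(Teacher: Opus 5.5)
Your first step is correct and matches the paper: a failing Salia index $i\ge t+1$, or the even exceptional index, would give $e(\Hcal)\le g_r(n,t+1)$ by unimodality and the endpoint hypothesis. So the violation is exactly at $t$, and the $t$ lowest-degree vertices have degree in $\bigl(\binom{t-1}{r-1},\binom{t}{r-1}\bigr]$.

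The second step, however, rests on a false estimate. The number of missing $r$-sets in $B$ is bounded only by
$$M=g_r(n,t)-g_r(n,t+1)=\binom{n-t-1}{r-1}-\binom{t}{r-1}-(t+1)\binom{t}{r-2}.$$
This is not small compared with $\binom{n-t-1}{r-1}$. For $t=o(n)$ it is $(1-o(1))\binom{n-t-1}{r-1}$, which is as large as the whole $B$-link of one vertex. So ``every vertex of $B$ lies in many edges inside $B$'' does not follow, and without further reduction it is false. Take $\Hcal^{(1)}_{n,t}$, fix $u\in A\setminus S$, and delete $M-1$ of the edges containing $u$. All hypotheses still hold, but $u$ is left with $B$-degree only $\binom{t}{r-1}+(t+1)\binom{t}{r-2}+1$. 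Hence your claim that $B$ is Berge-Hamiltonian-connected ``with room to spare'' is unsupported. Your closing remark about missing $r$-sets addresses only the containment conclusion, not the routing.

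The paper handles this by first passing to an edge-maximal non-Hamiltonian $\Hcal$ and proving that $\Hcal[A]$ is complete:
\begin{enumerate}[label=(\alph*)]
\item a double count shows that at most one vertex of $A$ has small $A$-degree;
\item a P\'osa rotation shows that no Berge Hamiltonian path has both ends among the high-$A$-degree vertices, so by saturation these vertices span a complete hypergraph;
\item a successor-counting argument, using $t\binom{t}{r-2}>t+1$, eliminates the one exceptional vertex.
\end{enumerate}
Your proposal has neither the maximality reduction nor a substitute for it.

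The second, larger gap is that the classification in your (i)--(ii), where all the work lies, is only announced. Knowing that each link has support at least $t$ does not let you build Berge paths through $L$ with distinct hyperedges, because many of those neighbours may be joined to $x$ by only one or two hyperedges. A Hall argument between $L$ and its incident hyperedges does not capture the path structure you need.

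The missing idea is \emph{heavy} pairs, meaning pairs contained in at least $2t$ hyperedges. The key lemma is that every $x\in L$ has at least $t-1$ heavy neighbours among the shadow edges not lying inside $B$. It is proved by rotating a Berge Hamiltonian path ending at $x$, which exists by saturation once $\Hcal[B]$ is complete. The rotations show that the hyperedges at $x$ not spanned by heavy neighbours number only $O(t)$; the paper's bound is $2(2t-1)+(t+1)$. So at most $t-2$ heavy neighbours would force $d(x)\le\binom{t-2}{r-1}+O(t)\le\binom{t-1}{r-1}$, a contradiction.

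This is exactly where $r\ge4$ is needed: the gap $\binom{t-2}{r-2}$ must beat the $O(t)$ error. Unimodality of $g_r$, to which you attribute $r\ge4$, actually holds for all $r\ge3$.

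With heavy neighbours in hand, the paper does a case analysis on a lexicographically maximal heavy linear forest in $L$. It shows that one of three things happens:
\begin{enumerate}[label=(\alph*)]
\item $L$ is covered by a linear forest with ends in $B$, whose edges receive distinct hyperedges greedily and which closes into a Berge Hamiltonian cycle via Hall's theorem inside the complete $\Hcal[B]$;
\item the shadow neighbourhood of $L$ in $B$ has at most one vertex;
\item $L$ is independent in the shadow and has at most $t$ neighbours in $B$.
\end{enumerate}
As it stands, your (i)--(ii) restates the desired conclusion rather than proving it.
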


The rest of the paper is organized as follows.  In
Section~\ref{sec:preliminaries} we collect notation and lemmas used
throughout the proof.  Section~\ref{sec:extremal-bound} proves the extremal
upper bound.  Section~\ref{sec:stability} proves the stability theorem.
Section~\ref{sec:concluding} contains concluding remarks and conjectures.

\section{Preliminaries}\label{sec:preliminaries}

We collect the notation used in the rest of the paper.  All hypergraphs
are finite, simple, and $r$-uniform unless explicitly stated otherwise.
For a hypergraph $\Hcal$, write $V(\Hcal)$ and $E(\Hcal)$ for its vertex
set and edge set, and write $d_{\Hcal}(v)$ for the degree of a vertex
$v$.  When the hypergraph is clear from the context, we write simply
$d(v)$.  For a vertex set $X\subseteq V(\Hcal)$, write $\Hcal[X]$ for the
subhypergraph induced by $X$.  The shadow graph of $\Hcal$, denoted by
$\partial\Hcal$, is the graph on $V(\Hcal)$ in which two vertices are
adjacent if they are contained together in some hyperedge of $\Hcal$.
For a graph $G$, write $N_G(v)$ and $d_G(v)$ for the neighborhood and
degree of $v$, and write $G[X]$ for the subgraph induced by $X$. 

\begin{theorem}[Salia \cite{Salia_2024}]\label{thm:salia}
Let $r\ge 3$ and $n>2r$.  Let
$d_1\le d_2\le \cdots\le d_n$ be the degree sequence of an
$n$-vertex $r$-uniform hypergraph $\Hcal$.  If
$$
\begin{aligned}
&d_i>i &&\text{for }1\le i<r,\\
&d_i>\binom{i}{r-1} &&\text{for }r\le i\le \left\lfloor\frac{n-1}{2}\right\rfloor,\\
&d_{(n-2)/2}>\binom{(n-2)/2}{r-1}+1 &&\text{when }n\text{ is even},
\end{aligned}
$$
then $\Hcal$ has a Berge Hamiltonian cycle.
\end{theorem}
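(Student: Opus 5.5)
Salia's theorem is quoted from \cite{Salia_2024} and is used here as a black box. If I had to prove it, I would follow a Pósa/Chvátal-type rotation–extension argument adapted to Berge paths. The role of the thresholds $\binom{i}{r-1}$ is the elementary count that a vertex $v$ lies in at most $\binom{i}{r-1}$ hyperedges whose other $r-1$ vertices come from a fixed $i$-set. Hence $d(v)>\binom{i}{r-1}$ forces $v$ to have a hyperedge leaving any prescribed $i$-set.

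\emph{Step 1: saturated counterexample.} I would take a counterexample with the maximum number of hyperedges among $n$-vertex $r$-graphs on the same vertex set satisfying the hypotheses. Adding a hyperedge never decreases any $d_i$, so such a maximal counterexample exists, and adding any missing $r$-set $e$ creates a Berge Hamiltonian cycle. That cycle must use $e$. Deleting $e$ leaves a Berge Hamiltonian path $v_1,E_1,\dots,E_{n-1},v_n$ in $\Hcal$ with $v_1,v_n\in e$. This gives many Hamiltonian Berge paths to work with.

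\emph{Step 2: closing a Hamiltonian path.} Fix such a path $P$. For the endpoint $v_1$, let $X$ be the set of indices $j$ for which some hyperedge $F\notin\{E_1,\dots,E_{n-1}\}$ contains both $v_1$ and $v_{j+1}$. Define $Y$ symmetrically for $v_n$, using some hyperedge $F'\notin\{E_1,\dots,E_{n-1}\}$ containing $v_n$ and $v_j$. If some $j$ lies in $X\cap Y$ and we can take $F\neq F'$, then the standard crossing
$$v_1,\dots,v_j,F',v_n,E_{n-1},\dots,v_{j+1},F,v_1$$
is a Berge Hamiltonian cycle. The hyperedges on the path can also be reused by shifting: rotations replace $E_j$ with a free edge. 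So the Pósa rotation set of endpoints obtainable from $P$ stays within the family of Hamiltonian Berge paths.

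\emph{Step 3: Pósa counting.} Let $U$ be the set of endpoints reachable from $v_1$ by rotations with $v_n$ fixed. The standard argument shows that the $P$-neighbours of $U$ number fewer than $2|U|$, so every edge at a vertex of $U$ meets only $U\cup U^{\pm}$, where $U^{\pm}$ denotes the set of path-neighbours of vertices of $U$. With $|U|=i\le\lfloor(n-1)/2\rfloor$, the vertices of $U$ then have all their free hyperedges inside a set of about $2i$ vertices. The key inequality is that, taking $i$ minimal, the $i$ vertices of $U$ have degree at most $\binom{i}{r-1}$ after accounting for the path edges. This is the main obstacle: the $n-1$ path hyperedges are shared, and a single hyperedge can serve several vertices. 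I would handle it by charging each path edge to at most one rotation, and by observing that a hyperedge containing $v$ and escaping the current neighbourhood yields a new rotation. This should give $d_i\le\binom{i}{r-1}$, or $d_i\le i$ in the small range $i<r$ where a star count replaces the binomial, contradicting the hypothesis.

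\emph{Step 4: even $n$.} The boundary case $|U|=(n-2)/2$ for even $n$ leaves exactly one extra configuration in which the counting is tight up to one hyperedge. This accounts for the ``$+1$'' in the last condition, and I would treat it by a separate direct inspection.
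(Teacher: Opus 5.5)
The paper gives no proof of this theorem. It is quoted from Salia~\cite{Salia_2024} and used only as a black box, in the proofs of Theorem~\ref{thm:extremal} and Lemma~\ref{lem:low-set}, so your opening sentence is exactly the paper's treatment.

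The outline you add, however, is not a proof, and the gap is in Step~3, the only step that carries real content. A P\'osa rotation closure cannot reach the threshold $\binom{i}{r-1}$. By your own description, it confines the non-defining hyperedges at the endpoint set $U$ to $U\cup U^{\pm}$, where $|U^{\pm}|<2|U|$. So with $i=|U|$ the best you obtain is $i$ vertices whose free degree is bounded by something of order $\binom{2i}{r-1}$ or worse, roughly $2^{r-1}$ times the threshold. Having $i$ vertices of degree at most $\binom{2i}{r-1}$ contradicts neither $d_i>\binom{i}{r-1}$ nor $d_{2i}>\binom{2i}{r-1}$. Already for graphs, rotations only give $|U|$ vertices of degree at most $2|U|-1$, not P\'osa's sharp $d_i\le i$.

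On top of this, each vertex of $U$ may lie in up to $n-1$ defining hyperedges of its current path. Since every rotation swaps a defining hyperedge for a free one, the closure is path-dependent. ``Charging each path edge to at most one rotation'' therefore gives no bound comparable to $\binom{i}{r-1}$, and certainly not to $i$ when $i<r$.

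What is missing is a mechanism that produces $i\le\lfloor (n-1)/2\rfloor$ vertices of degree at most $\binom{i}{r-1}$. One example would be $i$ vertices $v$ whose hyperedges all stay inside $\{v\}\cup N_v$ with $|N_v|\le i$, which is what your opening remark calibrates the threshold to. For graphs, the standard proof of the sharp bound gets this from a Chv\'atal-type crossing argument in a saturated counterexample, not from rotations. One takes the non-adjacent pair $u,v$ of maximum degree sum. The $d(u)$ predecessors of the neighbours of $u$ on a Hamiltonian $u$--$v$ path are then non-adjacent to $v$, so each has degree at most $d(u)$.

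A Berge version must additionally guarantee that the closing hyperedges are distinct from each other and from those still used on the path. This is the $F\neq F'$ issue you raise in Step~2 but never resolve. It must also account for the even-$n$ exception with the extra $+1$, which Step~4 only asserts.

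So, for this paper, citing \cite{Salia_2024} is the right move, but your sketch should not be presented as a proof.
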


The extremal expression in Theorem~\ref{thm:extremal} is governed by the
one-variable function $g_r(n,t)$ from \eqref{eq:def-gr}.  We shall
repeatedly use the fact that, on the range relevant to Salia's theorem, this
function has only one valley.
Thus, whenever $t$ is fixed by the minimum-degree condition, the largest
possible value among all later low-degree indices occurs at an endpoint.

\begin{lemma}\label{lem:g-unimodal}
The function $g_r(n,t)$ is decreasing and then increasing as $t$
increases.  If $t_0$ is its first minimizer on
$r+1\le t\le \lfloor (n-1)/2\rfloor$, then
$$
        \frac{t_0}{n}\to
        \alpha_r:=\frac{1}{1+r^{1/(r-1)}}<\frac12 .
$$
Consequently, for every fixed $r$ there is $\varepsilon=\varepsilon(r)>0$
such that $t_0\le n/2-\varepsilon n$ for all sufficiently large $n$.
\end{lemma}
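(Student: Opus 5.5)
We study the forward difference $\Delta(t)=g_r(n,t+1)-g_r(n,t)$ and show it changes sign at most once, from negative to nonnegative. This gives the ``decreasing then increasing'' shape. Then we locate the sign change asymptotically to get $t_0/n\to\alpha_r$.

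\textbf{Step 1: the difference.} By Pascal's rule,
$$
\Delta(t)=-\binom{n-t-1}{r-1}+(t+1)\binom{t+1}{r-1}-t\binom{t}{r-1}
=-\binom{n-t-1}{r-1}+\binom{t}{r-1}+(t+1)\binom{t}{r-2}.
$$
The first term $-\binom{n-t-1}{r-1}$ is nondecreasing in $t$, and it is strictly increasing while $n-t-1\ge r-1$. The remaining terms $\binom{t}{r-1}+(t+1)\binom{t}{r-2}$ are nondecreasing in $t$, and strictly increasing for $t\ge r-2$.

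\textbf{Step 2: unimodality.} By Step 1, $\Delta$ is nondecreasing on the relevant range $r+1\le t\le\lfloor (n-1)/2\rfloor$. Hence the set of $t$ with $\Delta(t)<0$ is an initial segment of the interval. So $g_r(n,\cdot)$ decreases up to the first $t$ with $\Delta(t)\ge 0$ and is nondecreasing afterwards. The first minimizer $t_0$ is therefore the smallest $t$ in the interval with $\Delta(t)\ge 0$, or the right endpoint if there is no such $t$.

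\textbf{Step 3: asymptotics.} Write $t=xn$ with $x\in(0,1/2)$. Then
$$
\Delta(t)=\frac{n^{r-1}}{(r-1)!}\Bigl(-(1-x)^{r-1}+x^{r-1}+(r-1)x^{r-1}\Bigr)+O(n^{r-2}).
$$
Let $\phi(x)=r x^{r-1}-(1-x)^{r-1}$. This function is strictly increasing on $(0,1)$. It has its unique zero where $x/(1-x)=r^{-1/(r-1)}$, that is, at $x=\alpha_r=1/(1+r^{1/(r-1)})$.

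Since $r\ge 3$ gives $r^{1/(r-1)}>1$, we have $\alpha_r<1/2$. Moreover $\phi(1/2)=(r-1)2^{-(r-1)}>0$ and $\phi(x)<0$ for small $x$.

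Fix $\eta>0$. Then $\phi(\alpha_r-\eta)<0<\phi(\alpha_r+\eta)$. The error term $O(n^{r-2})$ is uniform for $x\in[0,1/2]$, so for large $n$ we get $\Delta<0$ at all $t\le(\alpha_r-\eta)n$ and $\Delta>0$ at $t\ge(\alpha_r+\eta)n$. For $t$ of lower order, say $t=o(n)$, the negative term $\binom{n-t-1}{r-1}$ dominates, so there is no trouble near the left end of the interval.

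Since $(\alpha_r+\eta)n<\lfloor(n-1)/2\rfloor$ for small $\eta$, the first nonnegative difference occurs at some $t_0\in[(\alpha_r-\eta)n,(\alpha_r+\eta)n]$. Hence $t_0/n\to\alpha_r$.

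Taking $\varepsilon=(1/2-\alpha_r)/2$ then gives $t_0\le n/2-\varepsilon n$ for large $n$.

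\textbf{Main point requiring care.} The delicate part is making the asymptotic expansion uniform in $t$ across the whole interval. I would handle this with the explicit bound
$$
\binom{m}{r-1}=\frac{m^{r-1}}{(r-1)!}+O(m^{r-2}),\qquad m\le n.
$$
The monotonicity of $\Delta$ from Step 1 is elementary.
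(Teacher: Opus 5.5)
Your proof is correct and follows essentially the same route as the paper: show the first difference of $g_r(n,\cdot)$ is monotone, then normalize it by $n^{r-1}/(r-1)!$ to get $rx^{r-1}-(1-x)^{r-1}$ with unique root $\alpha_r$, and locate the first sign change. The differences are cosmetic: you use the forward rather than backward difference and argue termwise monotonicity instead of computing the second difference. You also prove a uniform-in-$t$ error bound, which is more than you need, since monotonicity of the difference already lets you check the sign only at the two indices near $(\alpha_r\pm\eta)n$, as the paper does.
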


\begin{proof}
Let $\Delta_t=g_r(n,t)-g_r(n,t-1)$.  Then
$$
\begin{aligned}
\Delta_t
&=\binom{n-t}{r}-\binom{n-t+1}{r}
  +t\binom{t}{r-1}-(t-1)\binom{t-1}{r-1}\\
&=-\binom{n-t}{r-1}
  +\binom{t}{r-1}+(t-1)\binom{t-1}{r-2}.
\end{aligned}
$$
Moreover,
$$
\begin{aligned}
\Delta_{t+1}-\Delta_t
&=\binom{n-t-1}{r-2}
  +\binom{t}{r-2}
  +t\binom{t}{r-2}
  -(t-1)\binom{t-1}{r-2}\\
&=\binom{n-t-1}{r-2}
  +2\binom{t}{r-2}
  +(t-1)\binom{t-1}{r-3}>0 .
\end{aligned}
$$
Thus the sequence $\Delta_t$ is strictly increasing, and so $g_r(n,t)$ is
unimodal.

Put $t=\lfloor xn\rfloor$, where $0<x<1$ is fixed.  Dividing the formula
for $\Delta_t$ by $n^{r-1}/(r-1)!$ gives
$$
        \frac{(r-1)!\Delta_t}{n^{r-1}}
        =r x^{r-1}-(1-x)^{r-1}+o(1).
$$
The function $r x^{r-1}-(1-x)^{r-1}$ is strictly increasing on $(0,1)$
and has the unique root
$$
        \alpha_r=\frac{1}{1+r^{1/(r-1)}}.
$$
For every $\eta>0$, the displayed asymptotic gives
$\Delta_{\lfloor(\alpha_r-\eta)n\rfloor}<0$ and
$\Delta_{\lceil(\alpha_r+\eta)n\rceil}>0$ for all sufficiently large
$n$.  Since $\Delta_t$ is increasing, the first minimizer lies between
these two indices.  Hence $t_0/n\to \alpha_r$.  As $\alpha_r<1/2$, the
last assertion follows.
\end{proof}

\section{The extremal bound}\label{sec:extremal-bound}

\begin{proof}[Proof of Theorem~\ref{thm:extremal}]
Let $d_1\le\cdots\le d_n$ be the degree sequence of $\Hcal$.
Since $\delta(\Hcal)\ge r+1$, the first line in Salia's theorem cannot
fail.  As $\Hcal$ is not Berge Hamiltonian, one of the remaining
conditions in Theorem~\ref{thm:salia} must fail.

Assume first that $n$ is odd.  Then there is an index
$i$ with
$$
        r\le i\le \left\lfloor\frac{n-1}{2}\right\rfloor
        \quad\text{and}\quad
        d_i\le \binom{i}{r-1}.
$$
The minimum-degree assumption gives $i\ge t$.  Let $X$ be the set of the
$i$ vertices of smallest degree.  Every edge either lies in $V(\Hcal)\setminus X$
or meets $X$, and the number of edges meeting $X$ is at most
$\sum_{x\in X}d(x)\le i\binom{i}{r-1}$.  Therefore
$$
        e(\Hcal)\le \binom{n-i}{r}+i\binom{i}{r-1}=g_r(n,i).
$$
By Lemma~\ref{lem:g-unimodal}, the maximum of $g_r(n,i)$ on
$t\le i\le \lfloor(n-1)/2\rfloor$ is attained at one of the endpoints.
This proves the odd case.

Now assume that $n$ is even.  If some
$r\le i\le (n-2)/2$ satisfies $d_i\le \binom{i}{r-1}$, the same argument
gives $e(\Hcal)\le g_r(n,i)$ with $i\ge t$.  Otherwise the only possible
failure of Theorem~\ref{thm:salia} is
$$
        d_{(n-2)/2}\le \binom{(n-2)/2}{r-1}+1.
$$
With $i=(n-2)/2$, the preceding counting gives
$$
        e(\Hcal)
        \le \binom{n-i}{r}
           +i\left(\binom{i}{r-1}+1\right)
        =g_r\left(n,\frac{n-2}{2}\right)+\frac n2-1.
$$
Together with Lemma~\ref{lem:g-unimodal}, this proves the even case.

For odd $n$, the constructions $\Hcal^{(1)}_{n,t}$ and
$\Hcal^{(1)}_{n,\lfloor(n-1)/2\rfloor}$ have minimum degree
$\binom{t}{r-1}$ and
$\binom{\lfloor(n-1)/2\rfloor}{r-1}$, respectively, contain no Berge
Hamiltonian cycle, and have the asserted numbers of edges.  Thus the
bound is sharp in the odd case.
\end{proof}

\section{Stability}\label{sec:stability}

Throughout this section, let the parameters satisfy the assumptions of
Theorem~\ref{thm:stability}.  The constant $C_r$ will be determined by
the finite inequalities that appear in the proof.  We may assume that
$\Hcal$ is edge-maximal subject to having no Berge Hamiltonian cycle,
since adding edges preserves the minimum-degree and edge-count
assumptions, and containment in $\Hcal^{(1)}_{n,t}$ or
$\Hcal^{(2)}_{n,t}$ passes to subhypergraphs.
Let
$$
        d_1\le d_2\le \cdots\le d_n
$$
be the degree sequence, and write $v_i$ for a vertex of degree $d_i$.

\begin{lemma}\label{lem:low-set}
Let $T=\{v_1,\ldots,v_t\}$ and $A=V(\Hcal)\setminus T$.  Then every
$x\in T$ satisfies
$$
        \binom{t-1}{r-1}+1\le k\le d(x)\le \binom{t}{r-1}.
$$
\end{lemma}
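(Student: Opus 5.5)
The lower bounds are immediate. The hypothesis $\binom{t-1}{r-1}<k$ is a strict inequality between integers, so $k\ge\binom{t-1}{r-1}+1$. Since $\delta(\Hcal)\ge k$, every $x\in T$ has $d(x)\ge k$. What remains is the upper bound $d(x)\le\binom{t}{r-1}$ for all $x\in T$. Because $T$ consists of the $t$ vertices of smallest degree, it is enough to prove $d_t\le\binom{t}{r-1}$.

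The plan is to use Salia's theorem (Theorem~\ref{thm:salia}) as in the proof of Theorem~\ref{thm:extremal}. The hypergraph $\Hcal$ is not Berge Hamiltonian, and $\delta(\Hcal)\ge k\ge r+1$; this holds because $t\ge C_r$ is large, so $k>\binom{t-1}{r-1}\ge r+1$. Hence the first line of Theorem~\ref{thm:salia} holds. Some later condition must therefore fail. That is, either $d_i\le\binom{i}{r-1}$ for some $r\le i\le\lfloor(n-1)/2\rfloor$, or, when $n$ is even, the exceptional condition $d_{(n-2)/2}\le\binom{(n-2)/2}{r-1}+1$ holds.

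In the first case, $d_i\ge k>\binom{t-1}{r-1}$ forces $i\ge t$. Suppose some such failing index satisfies $i\ge t+1$. The counting argument from the proof of Theorem~\ref{thm:extremal} then gives $e(\Hcal)\le g_r(n,i)$ with $t+1\le i\le\lfloor(n-1)/2\rfloor$. By Lemma~\ref{lem:g-unimodal}, $g_r(n,\cdot)$ on this interval is maximized at an endpoint, so
$$e(\Hcal)\le\max\{g_r(n,t+1),\,g_r(n,\lfloor(n-1)/2\rfloor)\}.$$
By the hypothesis of Theorem~\ref{thm:stability}, the right-hand side equals $g_r(n,t+1)$. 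This contradicts $e(\Hcal)>g_r(n,t+1)$.

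The exceptional even case is handled the same way. Put $i=(n-2)/2$. Counting gives $e(\Hcal)\le g_r(n,(n-2)/2)+n/2-1\le g_r(n,t+1)$ by hypothesis, again a contradiction. This needs $(n-2)/2\ge t+1$, which holds since $t+1\le t_0\le n/2-\varepsilon n$ by Lemma~\ref{lem:g-unimodal} and $n$ is large.

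So the only failing index is $i=t$, which gives $d_t\le\binom{t}{r-1}$ and completes the proof. The main point needing care is that the endpoint-maximum argument covers every possible failing index $i\ge t+1$, including the even exceptional one. This works because Lemma~\ref{lem:g-unimodal} reduces every such case to the two endpoint comparisons assumed in Theorem~\ref{thm:stability}.
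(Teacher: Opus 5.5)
Your proof is correct and follows the paper's argument. You apply Salia's theorem, observe that any failing index satisfies $i\ge t$, and rule out both $i\ge t+1$ and the even exceptional condition using the counting bound $e(\Hcal)\le g_r(n,i)$, unimodality of $g_r(n,\cdot)$, and the endpoint hypotheses of Theorem~\ref{thm:stability}, which leaves $d_t\le\binom{t}{r-1}$. Your explicit check that $\delta(\Hcal)\ge r+1$ follows from $t\ge C_r$ is a detail the paper leaves implicit. The stated need for $(n-2)/2\ge t+1$ in the exceptional even case is unnecessary, since the hypothesis $g_r(n,t+1)\ge g_r(n,\frac{n-2}{2})+\frac n2-1$ gives the contradiction directly.
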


\begin{proof}
The lower bound follows from $\delta(\Hcal)\ge k$ and the definition of
$t$.  We prove the upper bound.

By Theorem~\ref{thm:salia}, some Salia inequality must fail.  In the odd
case this gives an index $i$ with
$r\le i\le \lfloor(n-1)/2\rfloor$ and
$d_i\le \binom{i}{r-1}$.  As in the proof of
Theorem~\ref{thm:extremal}, this implies
$e(\Hcal)\le g_r(n,i)$, and the minimum-degree condition gives $i\ge t$.
If $i\ge t+1$, then the assumptions $t+1\le t_0$ and
$g_r(n,t+1)\ge g_r(n,\lfloor(n-1)/2\rfloor)$ imply, by unimodality, that
$g_r(n,i)\le g_r(n,t+1)$, contradicting
$e(\Hcal)>g_r(n,t+1)$.  Hence $i=t$, and so $d_t\le\binom{t}{r-1}$.

The even case is identical, except that the endpoint
$g_r(n,(n-2)/2)+n/2-1$ is used when the last even Salia condition fails.
The corresponding endpoint is dominated by $g_r(n,t+1)$ by assumption,
so this case is also impossible.  Thus again $d_t\le\binom{t}{r-1}$.
\end{proof}

The next calculation is used repeatedly.  Let
$$
        m_A:=\left|\binom{A}{r}\setminus E(\Hcal)\right|
$$
be the number of missing $r$-sets inside $A$, and let $e_T$ denote the
number of hyperedges of $\Hcal$ meeting $T$.  By
Lemma~\ref{lem:low-set},
$$
        e_T\le \sum_{x\in T}d(x)\le t\binom{t}{r-1}.
$$
Since $e(\Hcal)>g_r(n,t+1)$, it follows that
$$
\begin{aligned}
 m_A
 &=\binom{n-t}{r}-e(\Hcal[A])\\
 &=\binom{n-t}{r}-e(\Hcal)+e_T\\
 &<\binom{n-t}{r}-g_r(n,t+1)+t\binom{t}{r-1}\\
 &=g_r(n,t)-g_r(n,t+1)\\
 &=\binom{n-t-1}{r-1}
   -\binom{t}{r-1}
   -(t+1)\binom{t}{r-2}.
\end{aligned}
$$
Thus $m_A<M$, where
\begin{equation}\label{eq:missing-A}
M:=
\binom{n-t-1}{r-1}
-\binom{t}{r-1}
-(t+1)\binom{t}{r-2}.
\end{equation}

\begin{lemma}\label{lem:t-large-ineq}
For every fixed $r\ge4$ there is a constant $C_r^{(1)}$ such that, for
all $t\ge C_r^{(1)}$,
$$
        \binom{t+1}{r-1}+(t+1)
        <
        \binom{t}{r-1}+(t+1)\binom{t}{r-2}.
$$
\end{lemma}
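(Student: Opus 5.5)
Using Pascal's rule $\binom{t+1}{r-1}=\binom{t}{r-1}+\binom{t}{r-2}$, the inequality of Lemma~\ref{lem:t-large-ineq} is equivalent to
$$
        \binom{t}{r-2}+(t+1)<(t+1)\binom{t}{r-2},
$$
that is, after subtracting $\binom{t}{r-2}$ from both sides,
$$
        t+1< t\binom{t}{r-2}.
$$
So the plan is to perform this one cancellation and then verify the resulting elementary inequality for large $t$. This turns a comparison of two degree-$(r-1)$ expressions into a very crude inequality.

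For the reduced inequality, note that $r\ge4$ gives $r-2\ge2$. Hence for $t\ge r$ we have $\binom{t}{r-2}\ge\binom{t}{2}\ge t$ as soon as $t\ge3$; this works because binomial coefficients $\binom{t}{j}$ increase in $j$ for $j\le t/2$, or, more simply, because $\binom{t}{r-2}\ge t$ whenever $1\le r-2\le t-1$. Therefore
$$
        t\binom{t}{r-2}\ge t^2>t+1
$$
for $t\ge2$. It thus suffices to take, for instance, $C_r^{(1)}=r$. This choice also ensures $t-1\ge r-2$, so that $\binom{t}{r-2}\ge t$ holds.

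There is no real obstacle here. The only point needing care is the direction of Pascal's identity and confirming that $r-2$ lies in the range $1\le r-2\le t-1$, where $\binom{t}{r-2}\ge t$. The hypothesis $r\ge4$ is not even essential for this step; it is used elsewhere in the paper. Note that the lemma is stated with room to spare: the gap between the two sides is roughly $t\binom{t}{r-2}$, which is of order $t^{r-1}$.
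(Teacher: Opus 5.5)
Your proof is correct and follows the paper's argument. Both use Pascal's rule to reduce the inequality to $t+1<t\binom{t}{r-2}$. The paper then simply appeals to the at-least-cubic growth of $t\binom{t}{r-2}$, while you make the threshold explicit: $\binom{t}{r-2}\ge t$ for $t\ge r$, which gives $C_r^{(1)}=r$.
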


\begin{proof}
Using $\binom{t+1}{r-1}=\binom{t}{r-1}+\binom{t}{r-2}$, the difference
between the right-hand side and the left-hand side is
$$
        t\binom{t}{r-2}-(t+1).
$$
Since $r\ge4$, we have $r-2\ge2$, and hence
$t\binom{t}{r-2}$ grows at least cubically in $t$.  Therefore it is
larger than $t+1$ for all sufficiently large $t$, where the threshold
depends only on $r$.
\end{proof}

\begin{lemma}\label{lem:A-complete}
The induced hypergraph $\Hcal[A]$ is complete.
\end{lemma}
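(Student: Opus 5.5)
The plan is to argue by contradiction using the edge-maximality of $\Hcal$. Suppose some $r$-set $F\subseteq A$ is not an edge. Then $\Hcal+F$ has a Berge Hamiltonian cycle, and this cycle must use $F$ as one of its defining edges. Deleting $F$ from the cycle leaves a Berge Hamiltonian path
$$
P=u_1,E_1,u_2,\ldots,E_{n-1},u_n
$$
in $\Hcal$ whose endpoints $u_1,u_n$ both lie in $F\subseteq A$. From here the aim is a Pósa-type rotation argument that gives one of two contradictions. Either it produces too many missing $r$-sets inside $A$, contradicting $m_A<M$ from \eqref{eq:missing-A}. Or it produces an extra vertex of small degree, which pushes the edge count below $g_r(n,t+1)$.

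\textbf{Step 1 (rotations).} Fix the endpoint $u_n$ and rotate at $u_1$. If $u_1$ lies in an edge $E\notin\{E_1,\dots,E_{n-1}\}$ together with some $u_{j+1}$, then $u_j$ becomes a new endpoint of a Hamiltonian Berge path. Let $Z$ be the set of endpoints reachable this way. For each $z\in Z$, the path closes into a Hamiltonian cycle as soon as an unused edge contains both $z$ and $u_n$. This gives two constraints.
\begin{itemize}
\item No unused edge contains both $z$ and $u_n$.
\item By the standard Pósa argument, $z$ lies, together with path-neighbours of $Z$, only in the edges of $P$ or in edges meeting $Z\cup N_P(Z)$.
\end{itemize}
I will also swap the roles of the two endpoints to get a symmetric set $Z'$ around $u_n$.

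\textbf{Step 2 (large $Z$).} Suppose $|Z|$ is large, say larger than about $t+1$. Every pair $\{z,u_n\}$ with $z\in Z\cap A$ is then covered by no edge of $\Hcal$ except path edges. Hence almost every $r$-set of $A$ containing $u_n$ and some vertex of $Z\cap A$ is missing. Since $|T|=t$, we have $|Z\cap A|\ge|Z|-t$. Counting the $r$-sets of $A$ through $u_n$ that meet $Z\cap A$, after discounting the at most $n-1$ path edges, should give at least roughly $\binom{n-t-1}{r-1}-\binom{n-t-1-|Z\cap A|}{r-1}-n$ missing sets. For $|Z\cap A|$ linear this exceeds $M$. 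I expect to need the full rotation closure, or a second endpoint, to guarantee $Z\cap A$ is large rather than merely $Z$.

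\textbf{Step 3 (small $Z$).} If $|Z|\le t+1$, the usual Pósa counting says every edge through an endpoint $z\in Z$ lies inside $Z\cup N_P(Z)$ apart from the edges of $P$. I want to improve this to the bound
$$
d(z)\le\binom{t+1}{r-1}+(t+1).
$$
This would give $t+1$ vertices (those of $T$, together with a suitable $z\in A$) all of degree at most $\binom{t+1}{r-1}+(t+1)$. Counting edges meeting this $(t+1)$-set then gives
$$
e(\Hcal)\le\binom{n-t-1}{r}+(t+1)\Bigl(\binom{t+1}{r-1}+t+1\Bigr).
$$
I need this to be at most $g_r(n,t+1)$, up to the slack available. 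The lower-order comparison needed is exactly Lemma~\ref{lem:t-large-ineq}, used through the identity $M+\binom{t}{r-1}+(t+1)\binom{t}{r-2}=\binom{n-t-1}{r-1}$. This is why $t\ge C_r$ is assumed. Alternatively the bound feeds back into the $m_A$ computation and yields $m_A\ge M$.

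\textbf{Main obstacle.} The delicate step is Step 3. The generic Pósa argument only bounds the degree of $z$ by about $\binom{2|Z|}{r-1}$, not by $\binom{t+1}{r-1}+(t+1)$. Getting the sharp form requires showing that the edges at $z$ essentially live on a fixed $(t+1)$-set $S'$: all but at most $t+1$ of them are $r$-subsets of $\{z\}\cup S'$ plus path edges. I would try first to take $S'$ to be the set of path-successors of $Z$ and use Berge-path edge-disjointness to charge each extra edge to a distinct successor.
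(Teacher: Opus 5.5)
\textbf{There is a genuine gap.} Your opening matches the paper: maximality yields a Hamiltonian Berge path whose endpoints lie in a missing $r$-set of $A$. Your closing numerics also match: you beat $M$ via Lemma~\ref{lem:t-large-ineq}. The middle, however, does not work.

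\emph{Step~2 is false as stated.} Recall
$M=\binom{n-t-1}{r-1}-\binom{t}{r-1}-(t+1)\binom{t}{r-2}$, which falls short of $\binom{n-t-1}{r-1}$ by only $\Theta(t^{r-1})$. So missing sets through the single vertex $u_n$ exceed $M$ only if all but roughly $r^{1/(r-1)}t$ vertices of $A$ lie in $Z$. For bounded $t$ and $|Z\cap A|=n/10$ you are far below $M$, and your $-n$ path-edge discount alone already swamps the gap.

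\emph{Step~3 is unproved,} as you acknowledge. So the split ``$|Z|>t+1$ versus $|Z|\le t+1$'' covers nothing rigorously, and the range $t+1<|Z\cap A|\lesssim n-(1+r^{1/(r-1)})t$ is handled by neither step. The root problem is that nothing in your setup forces rotation successors into a small fixed set. One-sided P\'osa rotation alone only gives information of the form $N(Z)\subseteq Z^-\cup Z\cup Z^+$.

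\textbf{The missing idea is a forbidden endpoint set.} The paper's proof proceeds as follows.
\begin{enumerate}
\item Split $A$ into $A_1=\{a: d_A(a)\ge\binom{\lceil n/2\rceil}{r-1}+n\}$ and $A_2=A\setminus A_1$.
\item Two vertices of $A_2$ would already produce more than $M$ missing sets, using $(1-x)^{r-1}+rx^{r-1}>2^{2-r}$ on $[0,1/2]$. Hence $|A_2|\le 1$.
\item Each vertex of $A_1$ has at least $\lceil n/2\rceil$ neighbours through non-defining edges. An Ore-type crossing argument then shows that no Hamiltonian Berge path has both endpoints in $A_1$. By maximality, $\Hcal[A_1]$ is complete.
\item If $A_2=\{u\}$, a missing set $\{u\}\cup S$ with $S\subseteq A_1$ yields a path from some $x\in A_1$ to $u$. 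Every rotation at $u$, whether through a non-defining edge of $\Hcal[A]$ or by re-using a defining edge containing $u$, must create a new endpoint outside $A_1$, i.e.\ in $T\cup\{u\}$.
\item Injectivity of the successor map gives exactly the sharp bound you were after, $d_A(u)\le\binom{t+1}{r-1}+(t+1)$, and Lemma~\ref{lem:t-large-ineq} finishes.
\end{enumerate}
Your guess of taking $S'$ to be the successors of $Z$ cannot work without such a forbidden set: nothing bounds their number.

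\textbf{A smaller point on Step~3's count.} If you bound all $t+1$ vertices by $\binom{t+1}{r-1}+t+1$, you get $e(\Hcal)\le g_r(n,t+1)+(t+1)^2$, which is too weak. You must keep $d(x)\le\binom{t}{r-1}$ for $x\in T$ (Lemma~\ref{lem:low-set}) and bound only the $A$-degree of the extra vertex. The margin then becomes $t\binom{t}{r-2}-(t+1)>0$.
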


\begin{proof}
%We give the saturation-and-rotation argument in full.  
Our proof relies on the assumed maximality of $\Hcal$ and a P\'osa-type rotation argument.
Since
$t+1\le t_0$, Lemma~\ref{lem:g-unimodal} gives
$t\le n/2-\varepsilon n$ for some $\varepsilon=\varepsilon(r)>0$ and all
sufficiently large $n$.  This linear separation from $n/2$ follows from
the position of the first minimizer and does not impose any lower bound on
$t$.

For $a\in A$, let $d_A(a)$ denote the degree of $a$ in $\Hcal[A]$, and
put
$$
 A_1=\left\{a\in A:\ d_A(a)\ge
        \binom{\lceil n/2\rceil}{r-1}+n\right\},
        \qquad
 A_2=A\setminus A_1.
$$
Every vertex of $A_2$ is contained in more than
$$
        D:=\binom{n-t-1}{r-1}
        -\binom{\lceil n/2\rceil}{r-1}-n
$$
missing $r$-sets of $A$.  We first prove that $|A_2|\le 1$.  If
$u,v\in A_2$ are distinct, then the union of the missing $r$-sets of
$A$ containing $u$ and the missing $r$-sets of $A$ containing $v$ has size
larger than
$$
        2D-\binom{n-t-2}{r-2},
$$
because the overlap consists only of missing $r$-sets containing both
$u$ and $v$.  We claim that this is larger than the upper bound $M$ in
\eqref{eq:missing-A}, for all sufficiently large $n$.  Indeed, after
multiplication by $(r-1)!/n^{r-1}$ and writing $x=t/n$, the difference
between the left side and $M$ is
$$
        (1-x)^{r-1}+r x^{r-1}-2^{2-r}+o(1),
$$
uniformly for $0\le x\le 1/2$.  The function
$(1-x)^{r-1}+r x^{r-1}-2^{2-r}$ is positive on $[0,1/2]$: the sum
$(1-x)^{r-1}+x^{r-1}$ is at least $2^{2-r}$, and the additional
$(r-1)x^{r-1}$ makes the inequality strict, with a positive minimum on
the compact interval.  Hence two vertices in $A_2$ would create more
missing edges in $A$ than allowed by \eqref{eq:missing-A}.  Thus
$|A_2|\le1$.

We first show that there is no Berge Hamiltonian path in $\Hcal$ whose two
endvertices both lie in $A_1$.  Suppose that $P$ is such a path, with
endvertices $x,y\in A_1$.  At each of $x$ and $y$ there are at least
$\binom{\lceil n/2\rceil}{r-1}$ hyperedges of $\Hcal[A]$ containing the
endpoint which are not defining hyperedges of $P$.  Hence each endpoint
has at least $\lceil n/2\rceil$ neighbors in $A$ through non-defining
hyperedges.  The usual P\'osa rotation argument applies: if $z$ is joined
to $y$ by a non-defining hyperedge and the successor $z^+$ of $z$ on $P$
is joined to $x$ by a non-defining hyperedge, then these two hyperedges
together with the path $P$ form a Berge Hamiltonian cycle.  Thus the set
of successors of the non-defining neighbors of $y$ is disjoint from the
non-defining neighborhood of $x$.  Both sets have size at least
$\lceil n/2\rceil$, impossible on an $n$-vertex path.  Therefore, no such
path exists.

It follows that $\Hcal[A_1]$ is complete.  If an $r$-set
$R\subseteq A_1$ were missing, then by edge-maximality
$\Hcal+R$ would contain a Berge Hamiltonian cycle.  The new edge $R$ must
be used in that cycle; deleting it gives a Berge Hamiltonian path in
$\Hcal$ whose two endpoints lie in $R\subseteq A_1$, a contradiction.

It remains to prove that $A_2=\emptyset$.  Suppose to the contrary that
$A_2=\{u\}$.  Since
$|A_1|=n-t-1\ge n/2+\varepsilon n-1$, if every $r$-set consisting of
$u$ and $r-1$ vertices of $A_1$ were present, then
$$
        d_A(u)\ge \binom{|A_1|}{r-1}>
        \binom{\lceil n/2\rceil}{r-1}+n,
$$
for all sufficiently large $n$, contrary to $u\in A_2$.  Thus there is a
missing edge $R=\{u\}\cup S$ with $S\in\binom{A_1}{r-1}$.

By maximality, $\Hcal+R$ has a Berge Hamiltonian cycle.  Removing $R$
gives a Berge Hamiltonian path $P$ in $\Hcal$.  Since no Berge Hamiltonian
path has both endpoints in $A_1$, the endpoints of $P$ are $u$ and some
$x\in S\subseteq A_1$.  Orient $P$ from $x$ to $u$.

Let $B$ be the set of vertices $w\in A_1$ for which there is a
non-defining hyperedge of $P$ contained in $A$ and containing both $u$ and
$w$.  If $w\in B$, then the successor $w^+$ of $w$ on $P$ is not in
$A_1$; otherwise replacing the defining edge of $P$ between $w$ and
$w^+$ by a non-defining hyperedge containing $u,w$ gives a Berge
Hamiltonian path in $\Hcal$ with endpoints $x$ and $w^+$, both in $A_1$,
which is impossible.  The successor map on $P$ is injective, and
$V(\Hcal)\setminus A_1=T\cup\{u\}$.  Hence
$$
        |B|\le t+1.
$$
All non-defining edges of $\Hcal[A]$ containing $u$ are contained in
$\{u\}\cup B$.  We also need to count the defining hyperedges of $P$
which lie in $\Hcal[A]$ and contain $u$.  If such a defining hyperedge
$E_i$ is used for the pair $p_i p_{i+1}$ and $p_{i+1}\in A_1$, then
using $E_i$ instead on the pair $p_i u$ rotates $P$ into a Berge
Hamiltonian path with endpoints $x$ and $p_{i+1}$, both in $A_1$, a
contradiction.  Hence the successor $p_{i+1}$ of every such defining
edge, except possibly the last edge incident with the endpoint $u$, lies
in $T$.  The successor map is injective, so there are at most $t+1$
defining hyperedges of $\Hcal[A]$ that contain $u$.  Therefore
$$
        d_A(u)\le \binom{t+1}{r-1}+(t+1).
$$
Consequently the number of missing $r$-sets of $A$ containing $u$ is at
least
$$
        \binom{n-t-1}{r-1}-\binom{t+1}{r-1}-(t+1).
$$
By Lemma~\ref{lem:t-large-ineq}, if $t\ge C_r^{(1)}$, then
the displayed lower bound is larger than $M$.
Thus the number of missing $r$-sets of $A$ containing $u$ is larger than
$M$, contradicting \eqref{eq:missing-A}.  Hence $A_2=\emptyset$, and
therefore $A=A_1$ and $\Hcal[A]$ is complete.
\end{proof}

Let $G=\partial\Hcal$ be the shadow graph.  An edge $xy$ of $G$ is called
heavy if it is contained in at least $2t$ hyperedges of $\Hcal$.
Let $G_T$ be the graph obtained from $G$ by deleting all edges inside
$A$, and let $G_h$ be the subgraph of $G_T$ formed by the heavy edges.

\begin{lemma}\label{lem:heavy-degree}
Every vertex of $T$ has degree at least $t$ in $G_T$ and degree at least
$t-1$ in $G_h$.
\end{lemma}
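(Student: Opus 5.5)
The plan is to count, for a fixed vertex $x\in T$, the hyperedges containing $x$ according to which neighbours of $x$ in $G_T$ they use. The only input is the lower bound $d(x)\ge k>\binom{t-1}{r-1}$ from Lemma~\ref{lem:low-set}.

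\emph{Step 1 (degree in $G_T$).} Every hyperedge containing $x$ lies inside $\{x\}\cup N_{G}(x)$. Also, every neighbour of $x$ in $G$ is a neighbour of $x$ in $G_T$, since $x\notin A$ and so no edge $xy$ is deleted. If $d_{G_T}(x)\le t-1$, the hyperedges containing $x$ are $r$-sets of the form $\{x\}\cup R$ with $R\in\binom{N_G(x)}{r-1}$. Hence
$$
d(x)\le\binom{t-1}{r-1},
$$
contradicting Lemma~\ref{lem:low-set}. Therefore $d_{G_T}(x)\ge t$.

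\emph{Step 2 (degree in $G_h$).} Let $L$ be the set of neighbours $y$ of $x$ in $G_T$ such that $xy$ is not heavy; each such edge lies in fewer than $2t$ hyperedges. Let $N_h$ be the set of heavy neighbours of $x$, and suppose $|N_h|\le t-2$. Every hyperedge containing $x$ either lies inside $\{x\}\cup N_h$ or contains some vertex of $L$. This gives
$$
d(x)\le\binom{t-2}{r-1}+2t|L|.
$$
The obstacle is that $|L|$ is not a priori bounded, so this bound is useless as it stands; we need an upper bound on $d_{G_T}(x)$ or on $|L|$. I would try to get it from the structure already proved. By Lemma~\ref{lem:A-complete}, $\Hcal[A]$ is complete and $|A|\ge n/2+\varepsilon n$. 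One then argues, by a P\'osa-rotation or path-insertion argument as in the proof of Lemma~\ref{lem:A-complete}, that $x$ cannot have too many neighbours in $A$ through distinct hyperedges. Otherwise $x$ could be absorbed into a Berge path through $A$, and combined with the other vertices of $T$ this would yield a Hamiltonian cycle.

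A cleaner alternative avoids bounding $|L|$ altogether and uses $\binom{t-1}{r-1}-\binom{t-2}{r-1}=\binom{t-2}{r-2}$. The hyperedges through $x$ that are not inside $\{x\}\cup N_h$ must number more than $\binom{t-2}{r-2}$, which for $r\ge4$ and $t\ge C_r$ is far larger than $2t$. Then a greedy argument applies: choose such hyperedges one at a time. Each hyperedge that meets $L$ only in vertices already used is counted within the fewer than $2t$ hyperedges through those vertices. Hence there are many distinct vertices $y\in L$, each joined to $x$ by a private hyperedge. Together with the at least $t$ neighbours of $x$ from Step~1, this gives $x$ a large set of distinct hyperedge-neighbours. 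The rotation argument then embeds $x$, and in fact all of $T$, into a Hamiltonian cycle using the complete $\Hcal[A]$, a contradiction. So $|N_h|\ge t-1$.

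The main obstacle is therefore the second step: making precise that many light neighbours of a vertex of $T$ force a Berge Hamiltonian cycle. I would first try to show that each $x\in T$ has at most $t$, or $t+1$, neighbours in $A$ in total. Given that, the count $\binom{t-2}{r-1}+2t\cdot O(t)<\binom{t-1}{r-1}$ for $t\ge C_r$ closes the argument immediately, since $\binom{t-2}{r-2}$ has order $t^{r-2}\gg t^2$ when $r\ge 5$. For $r=4$ both terms have order $t^2$, so the constant has to be checked more carefully.
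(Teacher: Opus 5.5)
Your Step 1 is correct and is exactly the paper's argument. Step 2 has a genuine gap. You rightly note that $\binom{t-2}{r-1}+2t|L|$ is useless without control of the light neighbours, but you never obtain that control. The ``cleaner alternative'' then ends in an unsupported claim: that many private light neighbours of the single vertex $x$ let a rotation ``embed all of $T$'' into a Berge Hamiltonian cycle. A large neighbourhood of $x$ says nothing about how the other vertices of $T$ can be threaded into a cycle. That global statement is the content of Lemma~\ref{lem:shadow} and the end of the proof of Theorem~\ref{thm:stability}, and both use the present lemma as input.

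The missing ingredient is a Berge Hamiltonian path ending at $x$. Since $d(x)\le\binom{t}{r-1}<\binom{|A|}{r-1}$, some $\{x\}\cup R$ with $R\in\binom{A}{r-1}$ is a non-edge. By edge-maximality, adding it creates a Berge Hamiltonian cycle, and deleting it leaves a Berge Hamiltonian path $P$. This path runs from some $a\in A$ to $x$, because the proof of Lemma~\ref{lem:A-complete} shows that no such path has both ends in $A$. Orient $P$ from $a$ to $x$; the rotations are then:
\begin{itemize}
\item If $x,y$ lie in a non-defining hyperedge and $y^+\in A$, a fresh hyperedge of the complete $\Hcal[A]$ through $a$ and $y^+$ closes a Berge Hamiltonian cycle.
\item A defining hyperedge $E_i\ni x$ used for $p_ip_{i+1}$ with $p_{i+1}\in A$ can be reassigned to the pair $p_ix$, giving a Hamiltonian path with both ends in $A$.
\end{itemize}
Hence every non-defining neighbour of $x$ lies in the set $T^-$ of predecessors of $T$-vertices, with $|T^-|\le t$. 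Also, at most $t+1$ defining hyperedges contain $x$.

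This does not give your target ``at most $t$ or $t+1$ neighbours in $A$'', since defining hyperedges can add up to $(r-1)(t+1)$ further neighbours. It does give $d_G(x)=O(rt)$, which would close your count for $r\ge5$.

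For $r=4$ your count fails even with $|L|\le t+1$. Indeed $\binom{t-1}{3}-\binom{t-2}{3}=\binom{t-2}{2}\sim t^2/2<2t(t+1)$, so the resulting bound exceeds $\binom{t-1}{3}$ and gives no contradiction. What closes it is the following:
\begin{itemize}
\item Count the defining hyperedges through $x$ directly, at most $t+1$, rather than charging them to light pairs.
\item A heavy pair lies in $2t>t+1$ hyperedges, so every heavy neighbour is a non-defining neighbour, and hence $N_h\subseteq T^-$.
\item Consequently, with $s=|N_h|\le t-2$, at most $t-s$ light vertices occur in non-defining hyperedges through $x$.
\end{itemize}
This yields
$$
d(x)\le\binom{s}{r-1}+(2t-1)(t-s)+(t+1).
$$
The right-hand side is convex in $s$, so it is at most its value at $s=0$ or at $s=t-2$. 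The value at $s=t-2$ is the paper's bound $\binom{t-2}{r-1}+2(2t-1)+(t+1)$; the paper's ``at most two non-heavy pairs'' is this endpoint case. Both values are below $\binom{t-1}{r-1}+1$ for $r\ge4$ and $t\ge C_r$.
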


\begin{proof}
Fix $x\in T$.  By Lemma~\ref{lem:low-set}, $d(x)\le\binom{t}{r-1}$.
Since $\Hcal[A]$ is complete and $|A|>t$, there is an $(r-1)$-set
$R\subseteq A$ such that $\{x\}\cup R$ is not an edge of $\Hcal$.
Adding this edge creates a Berge Hamiltonian cycle, and deleting the new
edge from that cycle gives a Berge Hamiltonian path $P$ from some
vertex $a\in A$ to $x$.  The proof of Lemma~\ref{lem:A-complete} showed
that no Berge Hamiltonian path of $\Hcal$ has both endpoints in $A$,
which is why one endpoint of $P$ is $x$.

If $x$ had at most $t-1$ neighbors in $G_T$, then every hyperedge
containing $x$ would be contained in $\{x\}\cup N_{G_T}(x)$, and hence
$$
        d(x)\le \binom{t-1}{r-1},
$$
contrary to $d(x)\ge k\ge \binom{t-1}{r-1}+1$.  Thus
$d_{G_T}(x)\ge t$.

Orient $P$ from $a$ to $x$.  We shall use the following rotation
observation.  If $y$ is joined to $x$ by a hyperedge not used as a
defining hyperedge of $P$, and if the successor $y^+$ of $y$ on $P$ lies
in $A$, then, because $\Hcal[A]$ is complete and $n$ is large, there is a
hyperedge of $\Hcal[A]$ containing $a$ and $y^+$ which is not a defining
hyperedge of $P$.  Together with the non-defining hyperedge containing
$x$ and $y$, this rotates $P$ into a Berge Hamiltonian cycle, a
contradiction.  Hence every such successor $y^+$ lies in $T$.

Similarly, consider a defining hyperedge $E_i$ of $P$ which contains
$x$.  If $E_i$ is used for the pair $p_i p_{i+1}$ and
$p_{i+1}\in A$, then using the same hyperedge $E_i$ for the pair
$p_i x$ rotates $P$ into a Berge Hamiltonian path with endpoints
$a$ and $p_{i+1}$, both in $A$, again impossible.  Hence, except for the
last defining edge incident with the endpoint $x$, the successor
$p_{i+1}$ must lie in $T$.  Thus at most $t+1$ defining hyperedges of
$P$ contain $x$.

Now suppose $x$ has at most $t-2$ heavy neighbors in $G_T$.  All
hyperedges containing $x$ and using only heavy neighbors contribute at
most $\binom{t-2}{r-1}$ choices.  Hyperedges containing at least one
non-heavy pair $xy$ are charged to such a pair; each non-heavy pair is
contained in fewer than $2t$ hyperedges.  The rotation observation above
implies that, apart from the defining hyperedges containing $x$, at most
two non-heavy pairs at $x$ can occur without creating a Berge Hamiltonian
cycle.  Hence the non-heavy contribution is at most
$2(2t-1)$, and the defining contribution is at most $t+1$.  Therefore
$$
        d(x)\le \binom{t-2}{r-1}+2(2t-1)+(t+1).
$$
For $r\ge4$ and $t$ larger than a constant $C_r^{(2)}$, this is strictly smaller than
$\binom{t-1}{r-1}+1$, contradicting Lemma~\ref{lem:low-set}.
Therefore, $x$ has at least $t-1$ heavy neighbors.
\end{proof}

The following graph lemma is the technical heart of the stability proof.
It is a finite shadow statement; the constants are chosen so that a greedy
choice of distinct hyperedges is possible afterwards.

\begin{lemma}[Shadow covering lemma]\label{lem:shadow}
Let $G_T$ and $G_h$ be as above.  Suppose every vertex of $T$ has degree
at least $t$ in $G_T$ and degree at least $t-1$ in $G_h$.  Then one of the
following holds.
\begin{enumerate}[label=(\roman*)]
\item $G_T$ contains a linear forest $F$ covering all vertices of $T$,
whose components have endpoints in $A$ and have no edges inside $A$.
Moreover, to the edges of $F$ not in $G_h$ 
%may be assigned in advancedistinct hyperedges  
we can assign distinct hyperedges containing them.
\item $|N_{G_T}(T)\cap A|\le 1$.
\item $G_T[T]$ is empty and $|N_{G_T}(T)\cap A|\le t$.
\end{enumerate}
\end{lemma}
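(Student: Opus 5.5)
We split into cases according to $N:=N_{G_T}(T)\cap A$ and the structure of $G_T[T]$. The aim is to build $F$ greedily, and to show that if the greedy process gets stuck, then (ii) or (iii) holds. Recall $|T|=t$, and every $x\in T$ has $d_{G_T}(x)\ge t$ and $d_{G_h}(x)\ge t-1$. Since $x$ has at most $t-1$ neighbours inside $T$, each $x\in T$ has at least one $G_T$-neighbour in $A$. If $G_T[T]$ is empty, then each $x\in T$ has at least $t$ neighbours in $A$, and at least $t-1$ of them are heavy.

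\textbf{The case $|N|\ge 2$.} We construct $F$ by a Hall/greedy argument, treating $F$ as a set of paths with both ends in $A$ and interiors in $T$.

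First suppose $|N|\ge t+1$. Order the vertices of $T$ and attach each $x\in T$ as a one-vertex interior of a path $a\,x\,a'$ with distinct fresh endpoints $a,a'\in N_{G_T}(x)\cap A$. Paths may share endpoints in $A$ only if no cycle is created and no $A$-vertex gets degree exceeding $2$ in $F$. Since components are allowed to be arbitrary linear forests, it is cleaner to link the vertices of $T$ into one long path using $T$--$T$ edges and $T$--$A$--$T$ hops through distinct $A$-vertices. In the extreme case where $G_T[T]$ is empty, we need $t+1$ distinct $A$-vertices forming a path $a_0x_1a_1x_2\cdots x_ta_t$. Such a path exists by a Hall-type argument on the bipartite graph $G_T[T,A]$ with minimum degree $t$ on the $T$ side: a bipartite graph with $|T|=t$ and $\delta_T\ge t$ contains a path through all of $T$ provided the neighbourhood union is large enough. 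This is where $|N|\ge t+1$ is used, and it is exactly the negation of (iii) in the empty case.

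If $G_T[T]$ is nonempty, we use $T$--$T$ edges to merge vertices and reduce the number of required $A$-hops. A standard Pósa/Dirac argument on the auxiliary graph $G_T[T\cup N]$ then gives the linear forest as soon as $|N|\ge 2$. Concretely, a single path $a\,x_1x_2\cdots x_t\,a'$ with $a\ne a'$ exists once $G_T[T]$ is Hamiltonian-connected-like. When it is not, we split $T$ into blocks, and each block needs its own pair of ends in $N$.

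\textbf{Assigning hyperedges.} For each edge of $F$ that is not heavy we must choose a hyperedge containing it, with the chosen hyperedges distinct. Heavy edges need no assignment, since they lie in at least $2t$ hyperedges and can be handled greedily later. So we arrange that $F$ uses non-heavy edges only rarely. Each $x\in T$ has at most one non-heavy edge among any $t$ of its $G_T$-neighbours, and we pick the edges of $F$ at $x$ from $N_{G_h}(x)$ whenever possible. Then only $O(1)$ non-heavy edges are used, at most one per vertex and only where forced. Distinct hyperedges can be chosen for them because a hyperedge containing $x$ and a non-heavy neighbour $y$ determines that pair unless two such pairs lie in the same hyperedge, and this can be avoided by a small Hall check.

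\textbf{The case $|N|\le 1$.} This is exactly (ii).

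\textbf{Main obstacle.} I expect the hardest step to be the middle regime: $G_T[T]$ is nonempty but sparse and $2\le |N|\le t$. Here we must show that either a linear forest exists or $G_T[T]$ is in fact empty. I would first try a counting argument. If some $x\in T$ has a $T$-neighbour, then the components of $G_T[T]$ can be covered by paths of $G_T[T]$, with endpoints attached to $N$ via the at least $t-|\text{comp}|+1$ $A$-neighbours each endpoint has. Minimum degree $t$ then forces enough distinct $A$-neighbours to pair up endpoints without conflict, with a Hall argument on path-ends versus $N$. Where this fails I would fall back to the degree bound $\binom{t-1}{r-1}<d(x)$ to exclude the configuration.
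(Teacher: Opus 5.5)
Your case split is sound as bookkeeping: (ii) is exactly $|N|\le 1$, and in the empty case (iii) fails only if $|N|\ge t+1$. However, the proposal never proves (i) in the case that carries essentially all of the content, namely $G_T[T]$ nonempty and $|N|\ge 2$.

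\textbf{The nonempty case is not proved.} The sentence ``a standard P\'osa/Dirac argument on $G_T[T\cup N]$ gives the linear forest as soon as $|N|\ge 2$'' restates what must be shown, and your ``Main obstacle'' paragraph concedes this. The counting you propose there cannot close the gap. A vertex in a component of $G_T[T]$ with $c$ vertices is guaranteed only $t-c+1$ neighbours and $t-c$ heavy neighbours in $A$. These numbers are $1$ and $0$ when $G_T[T]$ is connected. Moreover, a component that is not traceable must be split into several paths, each needing its own two ends in $A$. The fallback to $d(x)>\binom{t-1}{r-1}$ is left unspecified.

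The missing idea is the paper's lexicographically maximal heavy path cover. Choose a path cover $F_0$ of $T$ by heavy edges, allowing one light end edge, whose vector of path lengths is lexicographically maximal. Maximality forbids many heavy edges at path ends. For example, an end of $P_1$ has no heavy neighbour in $T\setminus V(P_1)$, since otherwise a longer first path could be built. This turns $d_{G_h}\ge t-1$ into many heavy $A$-neighbours at the ends (e.g.\ at least $t-p_1$ at each end of $P_1$). That number beats the count of internal $A$-vertices of the partial forest, so the pieces can be attached greedily by the ``$|I(F)|+3$ neighbours'' insertion rule. Only the extreme configurations need rotation arguments: $F_0$ a Hamiltonian path of $T$, or $F_0$ consisting of singletons. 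These are exactly where (ii) and (iii) arise.

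\textbf{The light-edge bookkeeping rests on a false claim.} The hypotheses give only $d_{G_h}(x)\ge t-1$, so $x$ may have many light neighbours. Hence ``at most one non-heavy edge among any $t$ of its $G_T$-neighbours'' fails as soon as $d_{G_T}(x)>t$. Distinctness of the assigned hyperedges also does not follow from a Hall check on a given $F$. A light pair can lie in a single hyperedge that is also the only one containing another light pair of $F$. So distinctness has to be built into the choice of $F$. The paper allows at most one light edge inside $T$ and at most one light edge at each vertex of $T$. In the empty case it uses that no hyperedge contains two vertices of $T$.

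\textbf{The empty case as you state it is false.} With $G_T[T]$ empty and $|N|\ge t+1$, a single path $a_0x_1a_1\cdots x_ta_t$ need not exist. Let $T=T_1\cup T_2$, where all vertices of $T_j$ have the same $t$ neighbours $A_j$ and $A_1\cap A_2=\emptyset$. Then no alternating path crosses from $T_1$ to $T_2$. What is needed is a linear forest built by greedy insertion. A forest covering $i$ vertices of $T$ has at most $i-1$ internal $A$-vertices, so insertion, even with one heavy edge forced at each vertex, succeeds while $i\le t-2$. At the last vertex $v_t$ the only obstruction is that $F'$ is a single path with $N(v_t)\cap A=V(F')\cap A$. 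That closes a cycle through $T$. Then $|N|\ge t+1$ supplies a vertex with a neighbour off the cycle, and the cycle is opened there while keeping at most one light edge per vertex.
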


\begin{proof}
    
    First we pick a linear forest $F_0$ in $G_T$ such that it consists of heavy edges inside $T$ with at most one exception as an end edge of a path component.
    Let $P_1,\dots, P_s$ be the paths in $F_0$ and assume the size of the paths is $p_1 \ge p_2 \ge \cdots \ge p_s$, such that $\sum_{i=1}^{s}p_i = t$.
    We choose $F_0$ such that $(p_1,\ldots,p_s)$ is maximized in lexicographic order.
    Subject to this, we pick $F_0$ without a light edge, if possible. If not, then there is some $i$ such that $P_i$ contains the light edge. We pick $F_0$ such that $i$ is maximized.
    %if there is a light edge in $P_i$, we pick the $F_0$ such that $i$ is maximized~(when there is no light edge, we make a deal that $i = \infty$).
By the way how we picked $F_0$, if there is a light edge in $P_i$, then $p_j < p_i$ for all $j > i$.

    \noindent
    \textbf{Case 1:} $s = t$ and $p_i = 1, i=1,2,\ldots,t$.
    
    In this case, $G_T[T]$ has no edge.
    Let $F'$ be a linear forest with endpoints in $A$, covering the maximum number of vertices in $T$, and for each covered $v_i$, at most one neighbor is light.
    If $F'$ covers all the vertices in $T$, then we are done and (i) holds, because every light pair is contained in a distinct hyperedge since $G_T[T]$ has no edge.
    
    Suppose $v_{t-1}, v_{t}$ are not covered by $F'$.
    Note that there are at least $t-1$ heavy neighbors of $v_{t-1}$ in $A$ and at least $t$ neighbors of $v_{t-1}$ in $A$.
    There are at most $t-3$ internal vertices in $A$ used in $F'$. Then we can pick a heavy neighbor of $v_{t-1}$ in $T$ avoiding the internal vertices.
    If a vertex in $F'$ is chosen, then we can pick another neighbor of $v_{t-1}$ in $A$ avoiding the internal vertices and the other endpoint of the path. This way we obtain a linear forest with endpoints in $A$, covering more vertices in $T$ than $F'$, a contradiction to the choice of $F'$.
    Otherwise, we can pick another arbitrary neighbor of $v_{t-1}$ in $A$ avoiding the internal vertices, again obtaining a linear forest with endpoints in $A$, covering more vertices in $T$ than $F'$, a contradiction to the choice of $F'$.

    Now suppose only $v_t$ is not covered.
    If $F'$ is not a single path, then there are at most $t-3$ internal vertices in $A$.
    Then we can complete the proof by a similar argument as before.
    When $F'$ contains exactly one path, there are $t-2$ internal vertices in $A$.
    The above argument only fails when the neighbor set of $v_t$ is exactly $F'\cap A$.
    Then we have a cycle in $G_T$ covering $T$ such that every $v_i$ is incident to at most one light edge.
    Assume the cycle is $v_1a_1v_2a_2\cdots v_ta_tv_1$.
    Let $A' = F' \cap A = \{a_1,a_2,\ldots,a_t\}$, then we claim that $N_{G_T}(T) = A'$ and thus we arrive to (iii). Indeed, if $v_i$ has a neighbor outside $A'$, say $a'$, without loss of generality, assume $v_ia_i$ is heavy, then we have a path $a'v_ia_i\cdots v_1a_1 \cdots v_{i-1}a_{i-1}$, a contradiction to the choice of $F'$.

     \noindent
    \textbf{Case 2:} $s=1, p_1 = t$.
    Assume the path is $v_1v_2\cdots v_t$.
    
    \textbf{Subcase 2.1:} all the edges are heavy in $F_0$. %Dani: in $F_0$??? or inside $T$???...
    
    If there is no heavy Hamiltonian cycle in $T$, then $v_1$ has a heavy neighbor $a_1 \in A$, and $v_t$ has a heavy neighbor $a_2 \neq a_1 \in A$ which leads to (i).
    Without loss of generality, assume $v_1v_2\ldots v_tv_1$ is a heavy cycle.
    
    If $v_1$ has a heavy neighbor $a_1 \in A$, 
    then we claim that for every $v_i$ with $i\neq 1$, $N(v_i)=T\cup \{a_1\}$.
    Otherwise, suppose $v_i$~($i < t$) has a neighbor $a_i \neq a_1$, then consider the heavy neighbors of $v_2$.
    If $v_2$ has a heavy neighbor $a_2 \neq a_1 \in A$, then (i) holds.
    If $v_{i+1}$ is a heavy neighbor of $v_2$, then we are done as well, because there is a heavy path with $v_1$ and $v_i$ as end vertices, and they have distinct heavy neighbors in $A$.
    Thus, according to 
    Lemma \ref{lem:heavy-degree} for vertex $v_2$, $a_1$ must be a heavy neighbor of $v_2$.
    We can continue this process and take $v_j$ as the beginning vertex for $j=2,\dots,i-1$ one by one, and we can get that $a_1$ is a heavy neighbor of $v_{i-1}$. This way we also arrive to (i).

    If $v_1$ has a heavy neighbor $a_1 \in A$, then by the previous analysis, $N(v_i) = T \cup \{a_1\}$ for all $i \neq 1$.
    If $v_1$ has no neighbors in $A\setminus \{a_1\}$, then we are done as (ii) holds.
    Otherwise, $v_1$ has a neighbor other than $a_1$ in $A$, then the heavy neighbors of each $v_i$ must be all the vertices in $T$.
    Then $G_h[T]$ is a clique.
    Note that if $v_1$ has no heavy neighbor in $A$, by the symmetry, $G_h[T]$ is a clique.
    So this is the only remaining case.

    When $G_h[T]$ is a clique, it is easy to prove that $N_{G_T}(T) \cap A$ has at most one vertex, thus (ii) holds.

    \textbf{Subcase 2.2:} there is a light edge in $F_1$, say $v_{t-1}v_{t}$. 

    We may assume $v_1v_t$ is not a heavy edge, otherwise $v_tv_1v_2\dots v_{t-1}$ is a path with both edges heavy, and we are done by the previous case.
    Since $v_1v_t$ is not heavy, $v_1$ has a heavy neighbor $a_1 \in A$.
    Note that $v_{t-1}v_{t}$ is not a heavy edge, then $v_{t}$ has at least two heavy neighbors in $A$.
    Then we are done as (i) holds.

    \noindent
    \textbf{Case 3:} $p_2 = 1$.
    
    Assume the paths are $P_1=v_1 \ldots v_{p_1}$ and the singletons $u_1, u_2,\ldots, u_q$ with $p_1 + q = t$ and $q\geq 1$.

    We shall repeatedly use the following simple observation. Let $F$ be a
linear forest whose components have endpoints in $A$, and let
$$
I(F)=\{a\in A\cap V(F): d_F(a)=2\}
$$
be the set of internal $A$-vertices of $F$. Suppose that
$u\in T\setminus V(F)$ has at least $|I(F)|+3$ neighbors in $A$.
Then $u$ can be inserted into $F$ using two of these neighbors without
creating a cycle. Indeed, after avoiding $I(F)$, at least three neighbors
of $u$ remain in $A$. Among any three such vertices, two are not the two
endpoints of the same component of $F$. Choosing such two vertices $a,b$
and adding the edges $au$ and $ub$ keeps a linear forest whose components
still have endpoints in $A$. The same statement holds with ``neighbors''
replaced by ``heavy neighbors''.

    \textbf{Subcase 3.1:} there is no light edge in $P_1$.
    
    Then each $u_i$ has at least $q+1$ heavy neighbors in $A$ and at least $q+2$ neighbors in $A$.
    $v_1, v_{p_1}$ has at least $q+1\geq 2$ heavy neighbors in $A$ and at least $q+2$ neighbors in $A$.

    First pick a heavy neighbor $a_1$ of $v_1$ in $A$, and a neighbor $a_2$ of $v_{p_1}$ in $A$ with $a_2 \neq a_1$.
    Let $F'$ be the linear forest with endpoints in $A$, covering the maximum number of vertices in $T$, containing $a_1P_1a_2$ and for each $u_i$, and there is at most one light edge in $F'$ incident to $u_i$ for some $i=1,\dots,q$.
    
    We define $I(F')$ to denote the vertices in $A\cap V(F')$ with degree two in $F'$, which is the set of internal vertices in $F'$.

    If $F'$ covers all the vertices in $T$, then (i) holds.
    If a vertex, say $u_q$ is not covered, then notice that $|I(F')|\leq q-2$, then we can pick a heavy neighbor $a_q$ of $u_q$ in $A$ avoiding $I(F')$.
    Since $q+2-(q-1) > 3$, we can pick another heavy neighbor of $u_q$ in $A$ avoiding $I(F')$. In the case these two neighbors are endpoints of the same path in $F'$, we can pick a third heavy neighbor. 
    %and when $u_q$ is in $F'$ avoiding the other endpoint of the path in $F'$ containing $u_q$ (which avoids the case when this extends to a cycle), a contradiction to the choice of $F'$.

    \textbf{Subcase 3.2:} $v_1v_2$ is a light edge.
    
    Then $v_1$ has at least one heavy neighbor $a_1$ in $A$.
    Note that $v_{p_1}v_1$ cannot be a heavy edge (otherwise $v_2v_3 \dots v_{p_1}v_1$ is a heavy path, which is done by the previous case), then $v_{p_1}$ has at least $q+1$ heavy neighbors in $A$.
    Pick a heavy neighbor $a_2$ of $v_{p_1}$ in $A$ with $a_2 \neq a_1$.

    For each $u_i$, when $p_1 = 2$, it cannot have $v_1$ and $v_2$ as heavy neighbors, because it will cause a heavy $P_2$, which violates the choice of $F_0$.
    When $p_1 > 2$, it cannot have $v_2$ or $v_{p_1}$ as heavy neighbors, or we can find a heavy path with length $p_1$, which also violates the choice of $F_0$.
    Then $u_i$ has at least $q+1$ heavy neighbors in $A$.

Now let us extend $u_1,\ldots,u_{q-1}$ one by one. Suppose that, before
$u_i$ is inserted, the current forest is $F_i$. Since
$|I(F_i)|\le i-1$ and $u_i$ has at least $q+1$ heavy neighbors in
$A$, for $i<q$ we have
$$
q+1 \ge |I(F_i)|+3 .
$$
By the observation above, we can choose two heavy neighbors of $u_i$ in
$A\setminus I(F_i)$ which are not the two endpoints of the same component
of $F_i$. Adding the two corresponding edges inserts $u_i$ and keeps a
linear forest with endpoints in $A$.

    When we arrive at $u_q$, let $F'$ be the current linear forest.
    If $p_1 > 2$, it fails only when $u_q$ has $v_1, v_3,v_4,\ldots, v_{p_1-1}$ as heavy neighbors.
    Since all $u_i$'s are equivalent, all the $u_i$'s have $v_1, v_3,\ldots, v_{p_1-1}$ as heavy neighbors.
    When $q > 2$ and $p_1 = 3$, we have a heavy path $u_1v_1u_2$, a contradiction.
    When $q \ge 1$ and $p_1 > 3$, we have a longer heavy path $v_2v_3\ldots v_{p_1-1}u_qv_1$, a contradiction.
    Note that it is impossible that $q = 1$ and $p_1 = 3$ since $t$ is large.

    If $p_1 = 2$, then $G_h[T]$ is empty and every vertex in $T$ has at least $t-1$ heavy neighbors in $A$.
    If there is a path of length two or a matching of size two in $G_T[T]$ such that the edges are from distinct hyperedges, then we can extend greedily to the case (i).
    So the hyperedge containing $v_1,v_2$ is the only hyperedge that contains at least two vertices in $T$.
    Without loss of generality, assume $u_q$ is not in the hyperedge since $t > r$.
    Then there are at least $t$ neighbors of $u_q$ in $A$.
    Now we can first pick a heavy neighbor $a_q$ of $u_q$ in $A$ avoiding the vertices in $I(F')$, and then pick a neighbor of $u_q$ in $A$ avoiding the internal vertices and the other endpoint of the path if $a_q$ is in $F'$, which leads to (i).

    \textbf{Case 4:} $s \ge 2$ and $2 \le p_1 \le t-2$.
    In this case, we would like to extend $F_0$ to a linear forest $F$ satisfying (i).
    To achieve this, we aim to extend every $P_i$ one by one using heavy neighbors.

    Start with $P_1$ with size $p_1$.
    If there is no light edge in $P_1$, then for each end point of $P_1$, the vertices in $T\setminus P_1$ cannot be heavy neighbors, so there are at least $t - p_1$ heavy neighbors in $A$.
    Since $t - p_1 \ge 2$, we can extend $P_1$ to a path $P_1'$ with both end points in $A$ using heavy neighbors.
    If there is a light edge in $P_1$, assume that $P_1 = v_1v_2\ldots v_{p_1}$ and $v_{1}v_{2}$ are light.
    Similarly, $v_1$ has at least one heavy neighbor in $A$ and $v_{p_1}$ has at least $t - p_1 \ge 2$ heavy neighbors in $A$, thus we can extend $P_1$ to a path $P_1'$ with both end points in $A$ using heavy neighbors.

    Assume that after $i$ paths in $F_0$, we arrive at a path $P_{i+1}$ with $p_{i+1} \ge 2$. 
    Let $u_1,u_2$ be the endpoints of $P_{i+1}$.
    If we haven't met a light edge before this moment, then $u_1$ and $u_2$ cannot have the previous endpoints of the paths in $F_0$ as their heavy neighbors.
    If there is a light edge in $P_{j}$ for some $j \le i$, then the internal vertex in the light edge cannot be a heavy neighbor of $u_1$ or $u_2$.
    Moreover, let $P_{i+1} = u_1u_2\ldots u_{p_{i+1}}$, then $u_{p_{i+1}}$ cannot have vertices in $P_{i+2}, \ldots, P_{s}$ as its heavy neighbors unless $u_{p_{i}}u_{p_{i+1}}$ is a light edge and $p_{i+1} \ge 3$.
    As a conclusion, one of the end points has at least $2i$ heavy neighbors in $A$ and the other has at least $2i + p_{i+2} + \ldots + p_{s}$ heavy neighbors in $A$.
    Let $F_i=P_1'\cup\dots \cup P'_{i}$, then $|I(F_i)\cap A|\leq i-1$.
    When $2i > i-1$ and $2i + p_{i+2} + \ldots + p_{s} > i-1 +2$, then we can extend $P_{i+1}$ to a path $P_{i+1}'$ with both end points in $A$ using heavy neighbors such that $P_1', \ldots, P_{i+1}'$ is a linear forest.
    It only fails when $i=1$ and $s=2$.
    In this case, $p_1 \ge 3$, otherwise we have $t \le 4$, a contradiction.
    Assume $P_1 = v_1v_2\ldots v_{p_1}$, then $v_2$ cannot be a heavy neighbor of $u_1$ or $u_2$ as well, so $u_1$ and $u_2$ both have at least three heavy neighbors in $A$ and we are done since (i) holds.
    
    Assume that after $p$ paths of length at least one and $q$ singletons in $F_0$, we arrive to a singleton $u$, and the current linear forest is $F'$. 
    Then we claim that $u$ has at least $2p+q$ heavy neighbors in $A$.
    If we haven't met a light edge before this moment, then the endpoints of these paths and the singletons, at least $2p+q$ vertices of $T$ are not heavy neighbors of $u$, thus $u$ has at least $2p+q$ heavy neighbors in $A$. 
    If there is a light edge in $P_{j}$ for some $j \le p$, let $P_j = u_1u_2\ldots u_{p_j}$ and $u_1u_2$ is light, then when $p_j = 2$, $u_1$ and $u_2$ cannot be a heavy neighbor of $u$, when $p_j > 2$, $u_2$ and $u_{p_j}$ cannot be a heavy neighbor of $u$.
    There are at most $p+q-1$ vertices in $I(F')$.
    Since $|I(F')|\le p+q-1$ and $p\ge 2$, the vertex $u$ has at least
$$
(2p+q)-(p+q-1)=p+1\ge 3
$$
heavy neighbors in $A\setminus I(F')$. By the observation above, two of
these neighbors can be chosen so that they are not the two endpoints of the
same component of $F'$. Adding the corresponding two heavy edges therefore
inserts $u$ without creating a cycle by the observation in Case 3, and the resulting graph is again a
linear forest whose components have endpoints in $A$.  
\end{proof}

\begin{proof}[Proof of Theorem~\ref{thm:stability}]
Let $T$ and $A$ be as in Lemma~\ref{lem:low-set}.  By
Lemma~\ref{lem:A-complete}, $\Hcal[A]$ is complete.  By
Lemma~\ref{lem:heavy-degree}, the hypotheses of the shadow covering Lemma~\ref{lem:shadow}
hold.

If Lemma~\ref{lem:shadow}(ii) holds, then every hyperedge meeting $T$ is
contained in $T\cup\{a\}$ for some single vertex $a\in A$, while all
edges inside $A$ are allowed.  Hence $\Hcal$ is a subhypergraph of
$\Hcal^{(2)}_{n,t}$.

If Lemma~\ref{lem:shadow}(iii) holds, then no hyperedge contains two
vertices of $T$, and all vertices of $A$ that occur together with a vertex
of $T$ lie in a set $S\subseteq A$ of size at most $t$.  Enlarging $S$ to
size $t$ if necessary, we see that $\Hcal$ is a subhypergraph of
$\Hcal^{(1)}_{n,t}$.

It remains to exclude Lemma~\ref{lem:shadow}(i).  Let $F$ be the linear
forest given there, with its already prescribed hyperedges for light
edges.  Because every remaining edge of $F$ is heavy and $F$ has fewer
than $2t$ edges incident with $T$, we may greedily choose distinct
hyperedges of $\Hcal$ for all edges of $F$.

Since $\Hcal[A]$ is complete, the shadow on $A$ is a complete graph.
Extend the linear forest $F$ to a Hamiltonian cycle $C$ of the shadow graph
by adding edges with both endpoints in $A$ and by inserting all unused
vertices of $A$ along these $A$-edges.  We now choose distinct hyperedges
for the added $A$-edges.  Each pair in $A$ is contained in
$$
        \binom{|A|-2}{r-2}
$$
edges of $\Hcal[A]$, and at most $n$ of these have already been used or
forbidden.  For $n$ sufficiently large this leaves at least $\binom r2$
available hyperedges for every added pair.  For any set $X$ of added
$A$-edges, the number of incidences between $X$ and available hyperedges
containing them is at least $\binom r2 |X|$, while any single
$r$-hyperedge contains at most $\binom r2$ pairs of $X$.  Hall's theorem
therefore gives a matching from the added $A$-edges to available
hyperedges.  Together with the already chosen hyperedges for $F$, this
turns $C$ into a Berge Hamiltonian cycle of $\Hcal$, a contradiction.

Thus, we have completed the proof. 
\end{proof}
\section{Concluding remarks}\label{sec:concluding}
In this paper we proved an extremal theorem and a stability theorem for
Berge Hamiltonian cycles in $r$-uniform hypergraphs under a minimum degree
condition. The extremal theorem gives an upper bound in terms of the function
$g_r(n,t)$ and Salia's degree-sequence theorem. The stability theorem shows
that, for $r \ge 4$, for sufficiently large $t$, and in the dense range before
the first minimizer of $g_r(n,x)$, every near-extremal non-Hamiltonian example
is contained in one of the two natural constructions $\mathcal{H}^{(1)}_{n,t}$ and
$\mathcal{H}^{(2)}_{n,t}$.

There are several natural directions in which the statement could be extended.
We formulate them as conjectures.

\begin{conjecture}
For fixed $r \ge 4$, the conclusion of
Theorem~\ref{thm:stability} remains true for all admissible values of $t$
satisfying $t+1\le t_0$, without the assumption that $t$ is sufficiently
large.
\end{conjecture}

The assumption $t \ge C_r$ is used in our proof to simplify several estimates
and to exclude small exceptional shadow configurations. We believe that these
exceptions are only technical, and that the same two constructions
$\mathcal{H}^{(1)}_{n,t}$ and
$\mathcal{H}^{(2)}_{n,t}$ should remain the only extremal obstructions
after finitely many small cases are checked separately.

\begin{conjecture}
The conclusion of Theorem~\ref{thm:stability} also holds for $r=3$.
\end{conjecture}

The present proof uses $r \ge 4$ when controlling the contribution of light
pairs. For $r=3$, the relevant gap is smaller and has the same order as the
error term coming from light pairs. Thus our argument does not directly apply,
but we expect that the same qualitative stability phenomenon should still be
true.

\begin{conjecture}
Let $r \ge 3$, let $n > 2r$ be even, and let $H$ be an $n$-vertex
$r$-uniform hypergraph with no Berge Hamiltonian cycle. Let
\[
r+1 \le k < \binom{(n-2)/2}{r-1},
\]
and let $t=t(k)$ be the unique integer satisfying
\[
\binom{t-1}{r-1} < k \le \binom{t}{r-1}.
\]
If $\delta(H) \ge k$, then
\[
e(H) \le
\max \left\{
g_r(n,t),\,
g_r\left(n,\frac{n-2}{2}\right)
\right\}.
\]
The two values in the maximum are attained by
$\mathcal{H}^{(1)}_{n,t}$ and
$\mathcal{H}^{(1)}_{n,(n-2)/2}$, respectively.
\end{conjecture}

The even case of Theorem~\ref{thm:extremal} contains the additional term
$g_r(n,(n-2)/2)+n/2-1$, which comes from the exceptional last condition in
Salia's degree-sequence theorem. The conjecture above says that this extra
term should be an artifact of the proof rather than a genuine extremal
phenomenon. In other words, we expect the even case to have the same extremal
picture as the odd case, with the endpoint $(n-2)/2$ replacing
$\lfloor (n-1)/2 \rfloor$.

\section*{Declaration on the Use of Generative AI}

The authors used ChatGPT to assist with literature searching and writing.
The mathematical statements, proofs, and final content were reviewed and
edited by the authors, who take full responsibility for the accuracy and
integrity of the paper.

\bigskip
\section*{Acknowledgments} 
The research of Wang is supported by the China Scholarship Council (No. 202506210200) and
the National Natural Science Foundation of China (Grant 12571372). 

The research of Gerbner is supported by the J\'anos Bolyai scholarship.

The research of Zhao is supported by the China Scholarship Council (No. 202506210250) and
the National Natural Science Foundation of China (Grant 12571372).

\bibliography{ref.bib}
\bibliographystyle{wyc4}

\end{document}